\newcommand{\Z}{\mathbb{Z}}
\newcommand{\Q}{\mathbb{Q}}
\newcommand{\cC}{\mathcal{C}}
\newcommand{\cF}{\mathcal{F}}
\newcommand{\cG}{\mathcal{G}}
\newcommand{\cH}{\mathcal{H}}
\newcommand{\cL}{\mathcal{L}}
\newcommand{\cM}{\mathcal{M}}
\newcommand{\cN}{\mathcal{N}}
\newcommand{\cO}{\mathcal{O}}
\newcommand{\wtil}[1]{\widetilde{#1}}
\DeclareMathOperator{\Gal}{Gal}
\DeclareMathOperator{\Imag}{Im}
\DeclareMathOperator{\pd}{pd}
\DeclareMathOperator{\ram}{ram}
\DeclareMathOperator{\ab}{ab}
\DeclareMathOperator{\Hom}{Hom}
\DeclareMathOperator{\Ext}{Ext}
\DeclareMathOperator{\rank}{rank}
\DeclareMathOperator{\cyc}{cyc}
\DeclareSymbolFont{cyrletters}{OT2}{wncyr}{m}{n}
\DeclareMathSymbol{\Sha}{\mathalpha}{cyrletters}{"58}
\let\oldenumerate\enumerate
\renewcommand{\enumerate}{
   \oldenumerate
   \setlength{\itemsep}{1pt}
   \setlength{\parskip}{0pt}
   \setlength{\parsep}{0pt}
}
\let\olditemize\itemize
\renewcommand{\itemize}{
   \olditemize
   \setlength{\itemsep}{1pt}
   \setlength{\parskip}{0pt}
   \setlength{\parsep}{0pt}
}
\theoremstyle{plain}
\newtheorem{thm}{Theorem}[section]
\newtheorem{lem}[thm]{Lemma}
\newtheorem{prop}[thm]{Proposition}
\newtheorem{cor}[thm]{Corollary}
\newtheorem{claim}[thm]{Claim}
\theoremstyle{definition}
\newtheorem{defn}[thm]{Definition}
\newtheorem{rem}[thm]{Remark}
\newtheorem{eg}[thm]{Example}
\newcommand{\RG}{\mathsf{R}\Gamma}
\DeclareMathOperator{\Iw}{Iw}
\DeclareMathOperator{\cd}{cd}
\title
[Pseudo-null Iwasawa modules]
{Rarity of pseudo-null Iwasawa modules for $p$-adic Lie extensions}
\author[T.~Kataoka]{Takenori Kataoka}
\address{Department of Mathematics, Faculty of Science Division II, Tokyo University of Science.
1-3 Kagurazaka, Shinjuku-ku, Tokyo 162-8601, Japan}
\email{tkataoka@rs.tus.ac.jp}
\keywords{Iwasawa theory, Iwasawa modules, Greenberg's generalized conjecture}
\subjclass[2020]{11R23}
\date{\today}
\begin{document}

\maketitle

\begin{abstract}
In this paper, we obtain a necessary and sufficient condition for the pseudo-nullity of the $p$-ramified Iwasawa module for $p$-adic Lie extension of totally real fields.
It is applied to answer the corresponding question for the minus component of the unramified Iwasawa module for CM-fields.
The results show that the pseudo-nullity is very rare.
\end{abstract}

\section{Introduction}\label{sec:intro}

One of the main themes in Iwasawa theory is the study of various Iwasawa modules, including unramified Iwasawa modules.
It is a classical conjecture (see Greenberg \cite[Conjecture 3.5]{Gree01}) that the unramified Iwasawa modules for maximal multiple $\Z_p$-extensions are always pseudo-null.
This conjecture is often called Greenberg's generalized conjecture and a lot of research has been done, but it is still open.

It is a remarkable result of Hachimori--Sharifi \cite{HS05} that, when we consider non-commutative $p$-adic Lie extensions, the pseudo-nullity (in the sense of Venjakob \cite{Ven02}) of unramified Iwasawa modules often fails.
More concretely, assuming that the extension is ``strongly admissible'' and $\mu = 0$, they obtained a necessary and sufficient condition for the pseudo-nullity of the minus components (see Theorem \ref{thm:HS05}).

One of the main result of this paper (Theorem \ref{thm:main_CM}) claims a complete necessary and sufficient condition for the pseudo-nullity of the minus components of unramified Iwasawa modules, {\it without} assuming that the extension is ``strongly admissible'' or $\mu = 0$.
In fact, our first main theorem (Theorem \ref{thm:main}) handles $p$-ramified Iwasawa modules for totally real fields.
We then deduce Theorem \ref{thm:main_CM} from Theorem \ref{thm:main} and its variant by using a suitable duality.

\subsection{Main theorem for totally real fields}\label{ss:intro_real}

Let $p$ be an odd prime number throughout this paper.
Let $F$ be a totally real number field and $L/F$ a pro-$p$, $p$-adic Lie extension (so $L$ is also totally real).
We suppose the following:
\begin{itemize}
\item
$L \supset F^{\cyc}$, where $F^{\cyc}$ denotes the cyclotomic $\Z_p$-extension of $F$.
\item
$L/F$ is unramified at almost all primes of $F$.
\end{itemize}
Set $\cG = \Gal(L/F)$ and $\cH = \Gal(L/F^{\cyc})$, so $\cG/\cH \simeq \Gal(F^{\cyc}/F) \simeq \Z_p$.
We are mainly interested in the non-commutative case:
If $\cG$ is commutative, Leopoldt's conjecture predicts that $\cH$ is finite.

Let $S_p = S_p(F)$ be the set of $p$-adic primes of $F$.
Let $S \supset S_p$ be a finite set of finite primes of $F$.
We study the $S$-ramified Iwasawa module $X_S(L) = \Gal(M_S(L)/L)$, where $M_S(L)$ denotes the maximal abelian $p$-extension of $L$ unramified outside $S$.
It is known that $X_S(L)$ is finitely generated over the Iwasawa algebra $\Z_p[[\cG]]$.
Note that we {\it do not} assume that $L/F$ is unramified outside $S$.
The case where $S = S_p$ will be applied to deduce Theorem \ref{thm:main_CM}.

To state the result, we introduce the following:
\begin{itemize}
\item
Let $S_{\ram}^S = S_{\ram}^S(L/F^{\cyc})$ be the set of primes of $F^{\cyc}$ that are ramified in $L/F^{\cyc}$ and not lying above $S$.
\item
Let $\lambda_S$, $\mu_S$ be the Iwasawa $\lambda$, $\mu$-invariants of $X_S(F^{\cyc})$.
\end{itemize}
In fact, we have $\mu_S = \mu_{S_p}$ and $\lambda_S$ can be described by using $\lambda_{S_p}$ (see Lemma \ref{lem:S_varies}).

\begin{thm}\label{thm:main}
The $\Z_p[[\cG]]$-module $X_S(L)$ is pseudo-null if and only if $\mu_S = 0$ and (exactly) one of (i) and (ii) holds:
\begin{itemize}
\item[(i)]
$\dim \cG = 1$, $\lambda_S = 0$, and $\# S_{\ram}^S \leq 1$.
\item[(ii)]
$\dim \cG = 2$ and $\lambda_S + \# S_{\ram}^S = 1$.
\end{itemize}
\end{thm}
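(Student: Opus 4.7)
My approach is to convert the pseudo-nullity of $X_S(L)$ over $\Lambda(\cG) := \Z_p[[\cG]]$ into conditions on the classical Iwasawa invariants of $X_S(F^{\cyc})$ together with the ramification datum $S_{\ram}^S$, by combining a descent from $L$ to $F^{\cyc}$ with Venjakob's codimension criterion for pseudo-nullity.

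I would begin by recording that $M$ is pseudo-null over the Auslander-regular ring $\Lambda(\cG)$ iff $\Ext^i_{\Lambda(\cG)}(M,\Lambda(\cG)) = 0$ for $i=0,1$, equivalently $M$ has codimension $\geq 2$. Since for $L$ totally real one has $X_S(L) \simeq H^2$ of the Iwasawa cohomology complex $\RG_{\Iw,S}(L,\Z_p)$, these Ext groups can be computed via Jannsen's spectral sequence, or equivalently through a Poitou--Tate / Artin--Verdier duality applied to this complex. This identification is the right setup in which the ``local terms'' (decomposition/inertia data) attached to $S$ and to $S_{\ram}^S$ become visible.

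The crucial technical input is the $\Lambda(\cH)$-structure of $X_S(L)$ under $\mu_S = 0$. Using the fundamental diagram comparing the maximal $S$-ramified $p$-extensions of $L$ and $F^{\cyc}$, and decomposing by the inertia subgroups at primes $v \in S_{\ram}^S$ (each contributing a procyclic $I_v$ in the pro-$p$ setting), one aims at a rank formula tying $\rank_{\Lambda(\cH)} X_S(L)$ to $\lambda_S + \#S_{\ram}^S$, up to a ``trivial zero'' correction reflecting that $F$ is totally real. Given such a formula, pseudo-nullity first forces this rank to vanish, and further constraints come from demanding the residual $\Lambda(\cH)$-torsion part be pseudo-null in $\Lambda(\cG)$.

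The case analysis then runs as follows. First, $\mu_S > 0$ produces a full-support quotient of $X_S(L)$ via descent to $F^{\cyc}$, ruling out pseudo-nullity. Under $\mu_S = 0$, vanishing of the $\Lambda(\cH)$-rank yields $\lambda_S + \#S_{\ram}^S \leq 1$: for $\dim\cG = 1$ pseudo-null means finite, yielding case (i); for $\dim\cG = 2$ it requires $\lambda_S + \#S_{\ram}^S = 1$ together with an extra codimension verification for the residual $\Lambda(\cH)$-torsion, giving case (ii); for $\dim\cG \geq 3$ the combined constraints become numerically inconsistent. The main obstacle is establishing the rank formula with its correct trivial-zero correction, alongside the precise codimension bookkeeping (particularly for $\dim\cG \geq 3$) needed to rule the remaining cases out cleanly; a secondary subtlety is that even after the rank vanishes, one still has to verify that the torsion part satisfies the higher codimension condition, which I expect to follow automatically from the structure of the Iwasawa cohomology complex.
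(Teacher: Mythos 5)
There is a genuine gap: your entire argument is outsourced to an unproven ``rank formula'' of Hachimori--Sharifi/Kida type relating $\rank_{\Z_p[[\cH]]}X_S(L)$ to $\lambda_S+\#S_{\ram}^S$ (with a trivial-zero correction), and that is precisely the input which is known only under ``strong admissibility'' and $\mu=0$ --- hypotheses the theorem must do without (here $\cG$ may have $p$-torsion, $\cH$ may be finite, and $\mu_S=0$ is part of the conclusion, not an assumption). The paper's mechanism is entirely different and is absent from your sketch: Proposition \ref{prop:Tate_seq}(2) gives an exact sequence $0\to X_S(L)\to P\to Z^0_{\Sigma_f\setminus S}(L)\to 0$ with $\pd_{\Lambda}(P)\le 1$, whence $X_S(L)$ has no nonzero pseudo-null submodules (Proposition \ref{prop:PN_free}, Corollary \ref{cor:PN_free2}); so pseudo-nullity is equivalent to $X_S(L)=0$, and vanishing then forces $\pd_{\Lambda}(Z^0_{\Sigma_f\setminus S}(L))\le 1$, which by Brumer's theorem, Poincar\'e duality and Tate-cohomology computations (Lemma \ref{lem:pdZ1}, Proposition \ref{prop:key_abs}) imposes the constraints on $\cG$, $\cH$ and the decomposition groups at $S_{\ram}^S$. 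Nothing in your proposal plays this role, and the steps you do assert would fail or are vacuous, as follows.

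First, your claim that $\mu_S>0$ is excluded ``via descent'' is unjustified: pseudo-nullity passes to quotients, but the $\cH$-coinvariants of a pseudo-null $\Lambda$-module can have positive $\mu$-invariant over $\Z_p[[\cG/\cH]]$ (e.g.\ $\Lambda/(\sigma-1,p)$ for $\cG\simeq\Z_p^2$, $\cH=\overline{\langle\sigma\rangle}$), so no ``full-support quotient'' contradiction arises; in the paper the necessity of $\mu_S=0$ is available only after the reduction pseudo-null $\Rightarrow$ zero. Second, your exclusion of $\dim\cG\ge 3$ is asserted as a ``numerical inconsistency'' with no mechanism; note that for $\dim\cG\ge 2$ any module finitely generated and torsion over $\Z_p[[\cH]]$ is automatically pseudo-null over $\Lambda$, so your ``extra codimension verification'' is vacuous and your criterion collapses to rank zero, which by itself does not distinguish dimensions. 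The actual reasons are either the homological bound forced by $\pd_{\Lambda}(Z^0)\le 1$ or, at the numerical level, Lemma \ref{lem:H_cyclic}(2) (the condition $\mu_S=0$, $\lambda_S+\#S_{\ram}^S\le 1$ forces $\cH$ pro-cyclic, hence $\dim\cG\le 2$); neither appears in your argument. Third, the case $\dim\cG=1$ (why exactly $\#S_{\ram}^S\le 1$, with the unique ramified prime necessarily totally ramified) is untouched: the rank-formula framework says nothing when $\cH$ is finite, whereas the paper handles it by a Tate-cohomology computation (Claim \ref{claim:3}). Finally, even the ``if'' direction in the paper is an elementary Nakayama argument via the computation of $X_S(L)_{\cH}$, not a consequence of any rank formula; so as written your proposal neither proves its key input nor supplies the structural facts that make the necessity direction work.
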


This theorem shows that the pseudo-nullity of $X_S(L)$ is very rare.
See Example \ref{eg:Q} for a description for the case $F = \Q$.
In general, necessary conditions include $\mu_S = 0$, $\lambda_S + \# S_{\ram}^S \leq 1$, and $\cH$ is pro-cyclic (see Lemma \ref{lem:H_cyclic}(2)).
We note that Lemma \ref{lem:H_cyclic}(2) also shows that the condition $\dim \cG = 2$ in (ii) can be replaced by $\dim \cG \geq 2$.

\subsection{Main theorem for CM-fields}\label{ss:intro_CM}

Let $\cF$ be a CM number field and $\cL/\cF$ be a pro-$p$, $p$-adic Lie extension that is also a CM-field.
We write $L = \cL^+$ and $F = \cF^+$ for the maximal totally real subfields.
We suppose the same assumptions in \S \ref{ss:intro_real} hold for $L/F$, that is:
\begin{itemize}
\item
$\cL \supset \cF^{\cyc}$, where $\cF^{\cyc}$ denotes the cyclotomic $\Z_p$-extension of $\cF$.
\item
$\cL/\cF$ is unramified at almost all primes of $\cF$.
\end{itemize}
Set $\cG = \Gal(\cL/\cF) \simeq \Gal(L/F)$ and $\cH = \Gal(\cL/\cF^{\cyc}) \simeq \Gal(L/F^{\cyc})$.

Let $X(\cL)$ be the unramified Iwasawa module for $\cL$.
We study its minus component $X(\cL)^-$ with respect to the complex conjugation, which is finitely generated over $\Z_p[[\cG]]$.

To state the result, we introduce the following:
\begin{itemize}
\item
Put $\delta = 1$ if $\cF$ contains $\mu_p$ and $\delta= 0$ otherwise.
Here, $\mu_p$ denotes the group of $p$-th roots of unity.
\item
Let $S_{\ram}^- = S_{\ram}^-(\cL/F^{\cyc})$ be the set of non-$p$-adic primes of $F^{\cyc}$ that are ramified in $L/F^{\cyc}$ and split in (the quadratic extension) $\cF^{\cyc}/F^{\cyc}$.
\item
Let $\lambda^-$, $\mu^-$ be the Iwasawa $\lambda$, $\mu$-invariants of $X(\cF^{\cyc})^-$.
\end{itemize}

\begin{thm}\label{thm:main_CM}
The $\Z_p[[\cG]]$-module $X(\cL)^-$ is pseudo-null if and only if we have $\mu^- = 0$ and (exactly) one of (i), (ii), and (iii) holds:
\begin{itemize}
\item[(i)]
$\delta = 1$, $\dim \cG = 1$, $\lambda^- = 0$, and $\# S_{\ram}^- \leq 1$.
\item[(ii)]
$\delta = 1$, $\dim \cG = 2$, and $\lambda^- + \# S_{\ram}^- = 1$.
\item[(iii)]
$\delta = 0$, $\lambda^- = 0$, and $\# S_{\ram}^- = 0$.
\end{itemize}
\end{thm}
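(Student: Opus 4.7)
The plan is to reduce Theorem \ref{thm:main_CM} to Theorem \ref{thm:main} applied with $S = S_p$ to the totally real subfield $L/F$. The main tool is a duality comparing the minus component of the unramified Iwasawa module on the CM side with the $p$-ramified Iwasawa module on the totally real side, up to a Tate twist.

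I would first treat the case $\delta = 1$, i.e.\ $\mu_p \subset \cF$. In this situation, Kummer theory (equivalently, Jannsen's spectral sequence together with Iwasawa's classical reflection) yields a relation between $X(\cL)^-$ and $X_{S_p}(L)(1)$ modulo pseudo-null $\Z_p[[\cG]]$-modules. Since the Tate twist preserves pseudo-nullity, the pseudo-nullity of $X(\cL)^-$ becomes equivalent to that of $X_{S_p}(L)$, and Theorem \ref{thm:main} applies. It then remains to match the invariants: reflection at the base $\cF^{\cyc}$ identifies $\lambda^-, \mu^-$ with $\lambda_{S_p}, \mu_{S_p}$ after the twist; for the ramification set, a prime $v$ of $F^{\cyc}$ ramified in $L/F^{\cyc}$ contributes nontrivially to the minus component precisely when $v$ splits in $\cF^{\cyc}/F^{\cyc}$, otherwise its local contribution sits in the plus component. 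This identifies $S_{\ram}^{S_p}$ with $S_{\ram}^-$ on the minus side and produces cases (i) and (ii).

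For $\delta = 0$, I would reduce to the $\delta = 1$ case by passing to $\cF' = \cF(\mu_p)$ and the corresponding $\cL' = \cL(\mu_p)$. Since $[\cF':\cF]$ is prime to $p$, $X(\cL)^-$ is realized, up to pseudo-null modules, as a fixed eigenspace of $X(\cL')^-$ under $\Gal(\cF'/\cF)$. Applying the $\delta = 1$ criterion to $\cL'/\cF'$ and restricting to this eigenspace yields case (iii); the absence of a restriction on $\dim \cG$ reflects that on the relevant eigenspace the nontrivial action of the Teichm\"uller character forces the ramification and growth contributions from $\cL'/\cF'$ to collapse whenever $\lambda^- = 0$ and $S_{\ram}^- = \emptyset$.

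The main obstacle will be the careful bookkeeping under the duality and descent: correctly matching $S_{\ram}^{S_p}$ with $S_{\ram}^-$ via the split condition in $\cF^{\cyc}/F^{\cyc}$, translating $\lambda^-, \mu^-$ into $\lambda_{S_p}, \mu_{S_p}$ under the Tate twist, and, in the $\delta = 0$ case, controlling the descent from $\cF(\mu_p)$ back to $\cF$ without introducing pseudo-null errors that would spoil the sharp criteria in the theorem.
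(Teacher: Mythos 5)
Your $\delta=1$ half follows the same overall strategy as the paper (transfer pseudo-nullity across a duality and then quote Theorem \ref{thm:main} with $S=S_p$; your matching of $S_{\ram}^-$ with the split condition and of $\lambda^-,\mu^-$ with $\lambda_{S_p},\mu_{S_p}$ is the content of Propositions \ref{prop:Sram_dual} and \ref{prop:lm_dual}(1)). But the sentence ``Kummer theory yields a relation between $X(\cL)^-$ and $X_{S_p}(L)(1)$ modulo pseudo-null $\Z_p[[\cG]]$-modules'' assumes precisely the point that needs proof. Classical reflection gives this only over commutative Iwasawa algebras (and even there via the adjoint, not a pseudo-isomorphism on the nose); for a non-commutative $p$-adic Lie group $\cG$ the paper has to prove the transfer of pseudo-nullity in Proposition \ref{prop:lm_dual}(2), using Artin--Verdier duality at the level of Selmer complexes, the intervening modules $X'(\cL)^-$ and $Z_{S_p}^0(\cL)^-$ (which are \emph{not} pseudo-null when $\dim\cG=1$, forcing a separate treatment of that case), and the Auslander regularity of $\Lambda$ to see that the dual of a perfect complex with pseudo-null cohomology again has pseudo-null cohomology. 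Without an argument of this kind your reduction is incomplete, though it is a gap of justification rather than of strategy.

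The genuine gap is in the $\delta=0$ case. Passing to $\cF'=\cF(\mu_p)$, $\cL'=\cL(\mu_p)$ and identifying $X(\cL)^-$ with an eigenspace of $X(\cL')^-$ is fine, but you then propose to ``apply the $\delta=1$ criterion to $\cL'/\cF'$ and restrict to this eigenspace.'' That criterion governs the pseudo-nullity of the \emph{whole} module $X(\cL')^-$ and in particular forces $\dim\cG\le 2$, whereas case (iii) of the theorem has no bound on $\dim\cG$: for $\dim\cG\ge 3$ the eigenspace $X(\cL)^-$ can be pseudo-null (indeed zero) while $X(\cL')^-$ is not, so pseudo-nullity of an eigenspace is strictly weaker than what your cited criterion detects, and the restriction step is logically invalid. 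What is actually needed is a character-refined analogue of Theorem \ref{thm:main} for the non-trivial eigencomponent --- this is the paper's Theorem \ref{thm:main2}, proved from the $\psi$-component of the Tate sequence of Proposition \ref{prop:Tate_seq}(2), where the non-triviality of $\psi=\omega\chi^{-1}$ kills $(\Z_p)^{\psi}$ and Lemma \ref{lem:pdZ1} then forces $\cG_u\simeq\Z_p$ at every potentially ramified $u$. Your appeal to a ``collapse'' caused by the Teichm\"uller action does not supply this argument, and it also does not explain why (iii) demands $\# S_{\ram}^-=0$ exactly rather than $\# S_{\ram}^-\le 1$, nor why the $\dim\cG$ constraint disappears. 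As it stands, the $\delta=0$ case of your proof does not go through.
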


As in Theorem \ref{thm:main}, the condition $\dim \cG = 2$ in (ii) can be replaced by $\dim \cG \geq 2$.
This theorem is stronger than the result of Hachimori--Sharifi \cite[Theorem 1.2]{HS05}, as long as we are concerned with only the pseudo-nullity (see \S \ref{ss:HS}).

\subsection{The nature of the proof}\label{ss:pf_outline}

As already remarked, we will deduce Theorem \ref{thm:main_CM} from Theorem \ref{thm:main} and its variant by using a duality.
Let us briefly discuss the proof of Theorem \ref{thm:main}.

For simplicity, for a while we assume that $L/F$ is unramified outside $S$, that is, $S_{\ram}^S = \emptyset$.
The key idea is to use the Tate sequence (Proposition \ref{prop:Tate_seq}(1)) of the form
\[
0 \to X_S(L) \to P \to Q \to \Z_p \to 0,
\]
where $P$ and $Q$ are finitely generated $\Z_p[[\cG]]$-modules of $\pd_{\Z_p[[\cG]]} \leq 1$ ($\pd$ denotes the projective dimension).
This implies that $X_S(L)$ does not have a nonzero pseudo-null submodule (Proposition \ref{prop:PN_free}).
Therefore, the pseudo-nullity of $X_S(L)$ holds only if $X_S(L) = 0$, and then the Tate sequence shows
\[
\pd_{\Z_p[[\cG]]}(\Z_p) \leq 2.
\]
This inequality holds if and only if $\cG$ is $p$-torsion-free and $\dim \cG \leq 2$ (Lemma \ref{lem:pdZ1}).
This is a severe constraint, and indeed it suffices for our purpose.

When $L/F$ is not necessarily unramified outside $S$, we still have a variant of the Tate sequence (Proposition \ref{prop:Tate_seq}(2)) that is constructed in \cite{GKK22} by Greither, Kurihara, and the author in the case where $\cG$ is commutative and $\cH$ is finite.
The construction is also valid for the general case.
The sequence involves a module denoted by $Z_{\Sigma_f \setminus S}^0(L)$, which is more complicated compared to $\Z_p$.
Even in this case, a close study of $Z_{\Sigma_f \setminus S}^0(L)$ from a homological view gives a severe constraint on $\cG$ and its decomposition groups, which suffices for our purpose.
Note that this kind of argument was done in \cite[Proposition  2.14]{GKK22} (in the much simpler case that $\cG$ is commutative and $\cH$ is finite, of course).

\begin{rem}
More generally, we can apply this idea to the Selmer groups of ordinary $p$-adic representations.
The counterpart of the Tate sequences is obtained by using suitable Selmer complexes (one may use the formalism in the author's work \cite[\S 3]{Kata_27}).
For instance, we may deal with Selmer groups of ordinary elliptic curves, which should recover the pseudo-nullity criterion obtained by Hachimori--Sharifi \cite[Theorem 5.4]{HS05}.
Another application is the $\Sigma$-ramified Iwasawa modules for $p$-ordinary CM-fields, where $\Sigma$ is a $p$-adic CM-type.
An advantage of this situation is that we naturally encounter commutative Galois groups of dimension $\geq 2$.
However, the author does not think that these kinds of Iwasawa modules are expected to be often pseudo-null.
Coates--Sujatha \cite[Conjecture B]{CS05} conjectured the pseudo-nullity for the fine Selmer groups of elliptic curves, but our formalism cannot handle the fine Selmer groups.
For this reason, we do not pursue such generalizations in this paper.
\end{rem}

\section{Preliminaries}\label{sec:alg}

In this section, we review the definition and properties of pseudo-null modules over completed group rings of compact $p$-adic Lie groups.
The concept was introduced by Venjakob \cite{Ven02} (see also \cite{Ven03}).

Let $\cG$ be a compact $p$-adic Lie group.
We write $\dim \cG$ for the dimension of $\cG$ as a $p$-adic Lie group.
Let $\Lambda = \Z_p[[\cG]]$ be the completed group ring of $\cG$ over $\Z_p$.
We begin with listing basic facts (see \cite[\S 1.2]{Ven02} for a more detailed summary):

\begin{itemize}
\item
Any closed subgroup $\cG'$ of $\cG$ is again a $p$-adic Lie group (Cartan's theorem, see Serre \cite[Chap.~V, \S 9]{Ser06}).
\item
There is an open subgroup $\cG_0$ of $\cG$ that is pro-$p$ and $p$-torsion-free.
\item
The ring $\Lambda$ is (left and right) noetherian (Lazard \cite[Chap.~V, (2.2.4)]{Laz65}).
\end{itemize}

When $\cG$ is $p$-torsion-free, Venjakob \cite[Definition 3.1(iii)]{Ven02} introduced the concept of pseudo-nullity by showing that $\Lambda$ is an Auslander regular ring (\cite[Theorem 3.26]{Ven02}).
In any case, as in \cite{HS05}, let us define the pseudo-nullity in simple terms.

\begin{defn}\label{defn:PN}
A finitely generated $\Lambda$-module $M$ is pseudo-null if we have
\[
E^i_{\Lambda}(M) := \Ext_{\Lambda}^i(M, \Lambda) = 0
\]
for $i = 0 ,1$.
\end{defn}

\begin{rem}
It is easy to see that, when $\cG_0$ is an open subgroup of $\cG$ and we write $\Lambda_0 = \Z_p[[\cG_0]]$, we have an isomorphism 
\[
E_{\Lambda_0}^i(M) \simeq E_{\Lambda}^i(M)
\]
(see \cite[Proposition 2.7(ii)]{Ven02}).
In particular, the pseudo-nullity over $\Lambda$ and $\Lambda_0$ are equivalent.
Thus, to study the pseudo-nullity, we may assume $\cG$ is $p$-torsion-free (and pro-$p$ if necessary).

Let us check the equivalence of our definition and \cite[Definition 3.1(iii)]{Ven02} when $\cG$ is $p$-torsion-free.
As in \cite[Definition 3.3(i)]{Ven02}, we define the grade $j(M)$ of $M$ by 
\[
j(M) = \min \{i \mid E^i_{\Lambda}(M) \neq 0\}.
\]
Then our definition of the pseudo-nullity says $j(M) \geq 2$.
On the other hand, \cite[Definition 3.1(i)]{Ven02} introduces the dimension $\delta(M)$ of $M$ and define the pseudo-nullity as $\delta(M) \leq d - 2$, where $d = \dim \cG + 1$ is the homological dimension of $\Lambda$.
Now the equivalence follows from the formula $\delta(M) + j(M) = d$ (\cite[Proposition 3.5(ii)]{Ven02}).
\end{rem}

\begin{prop}\label{prop:PN_free}
Let $P$ be a finitely generated $\Lambda$-module such that $\pd_{\Lambda}(P) \leq 1$.
Then $P$ does not have nonzero pseudo-null submodules.
\end{prop}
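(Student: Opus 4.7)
The plan is to use the length-one projective resolution of $P$ together with a pullback construction to directly rule out nonzero pseudo-null submodules. The argument is elementary and does not require the reduction to the case where $\cG$ is $p$-torsion-free.

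First I fix a projective resolution
\[
0 \to F_1 \xrightarrow{\iota} F_0 \xrightarrow{\pi} P \to 0
\]
with $F_0, F_1$ finitely generated projective $\Lambda$-modules. For an arbitrary pseudo-null submodule $N \subseteq P$, I form the pullback $F_0' = \pi^{-1}(N) \subseteq F_0$, which yields simultaneously the short exact sequence $0 \to F_1 \to F_0' \to N \to 0$ and the inclusion $F_0' \hookrightarrow F_0$.

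Next I apply $\Hom_\Lambda(-, \Lambda)$ to the above short exact sequence. Using $E^0(N) = E^1(N) = 0$ from pseudo-nullity of $N$, together with $E^1(F_1) = 0$ from projectivity of $F_1$, the long exact sequence of Ext collapses to give that the restriction map $\iota^* \colon (F_0')^* \xrightarrow{\sim} F_1^*$ is an isomorphism. Dualizing once more, and using the natural isomorphism $F_1 \cong F_1^{**}$ valid for projective $F_1$, this produces an isomorphism $\iota^{**} \colon F_1 \xrightarrow{\sim} (F_0')^{**}$. On the other hand, the evaluation map $\mathrm{ev}_{F_0'} \colon F_0' \to (F_0')^{**}$ is injective, since by naturality of evaluation it factors the honest inclusion $F_0' \hookrightarrow F_0 \cong F_0^{**}$.

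Composing $\mathrm{ev}_{F_0'}$ with $(\iota^{**})^{-1}$ yields an injection $\phi \colon F_0' \hookrightarrow F_1$. Naturality of $\mathrm{ev}$ applied to $\iota \colon F_1 \to F_0'$ gives $\phi \circ \iota = \mathrm{id}_{F_1}$, so $\phi$ is an injective retraction of $\iota$. This forces $F_0' = \iota(F_1)$, and hence $N = F_0'/F_1 = 0$. The argument is essentially formal; the only step requiring a little bookkeeping is verifying via naturality that $\phi \circ \iota = \mathrm{id}_{F_1}$, after which the injective retraction $\phi$ immediately collapses $F_0'$ onto $F_1$. I do not foresee any serious obstacle.
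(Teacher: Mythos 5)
Your proof is correct and follows essentially the same route as the paper: pull back the submodule along a length-one projective presentation, use $E^0=E^1=0$ for the pseudo-null submodule to see that the dual of $F_1\hookrightarrow F_0'$ is an isomorphism, and deduce a splitting that forces the submodule to vanish. The only difference is cosmetic: the paper lifts the identity through $\Hom_\Lambda(-,F_1)$ and then uses $\Hom_\Lambda(M,F_0)=0$, while you double-dualize and use injectivity of the evaluation map $F_0'\to (F_0')^{**}$; both are valid.
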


\begin{proof}
This follows from \cite[Propositions 3.2(i) and 3.10]{Ven02}.
Let us give a direct proof by using Definition \ref{defn:PN}.
Let $0 \to F_1 \to F_0 \to P \to 0$ be a presentation of $P$ with $F_0, F_1$ finitely generated projective over $\Lambda$.
Let $M$ be a pseudo-null submodule of $P$.
By pull-back, we have a commutative diagram
\[
\xymatrix{
	0 \ar[r]
	& F_1 \ar[r]
	& F_0 \ar[r]
	& P \ar[r]
	& 0\\
	0 \ar[r]
	& F_1 \ar[r]_{f} \ar@{=}[u]
	& N \ar[r] \ar@{^(->}[u]
	& M \ar[r] \ar@{^(->}[u]
	& 0.
}
\]
By the pseudo-nullity of $M$, we have $\Ext_{\Lambda}^i(M, F_1) = 0$ for $i = 0, 1$, so the lower exact sequence induces an isomorphism
\[
f^*: \Hom_{\Lambda}(N, F_1) \to \Hom_{\Lambda}(F_1, F_1).
\]
Considering the lift of the identity in $\Hom_{\Lambda}(F_1, F_1)$ to $N$, we see that the injective map $f$ splits.
Therefore, the surjective map $N \to M$ also splits.
Since $N \subset F_0$ and $\Hom_{\Lambda}(M, F_0) = 0$, we must have $M = 0$.
\end{proof}

In \S \ref{ss:pf_main2}, we will investigate left $\Lambda = \Z_p[[\cG]]$-modules of the form
\[
\Z_p[[\cG/\cN]] = \Z_p[[\cG]] \otimes_{\Z_p[[\cN]]} \Z_p
\]
for closed subgroups $\cN$ of $\cG$.
We introduce two lemmas in advance.

\begin{lem}\label{lem:pdZ1}
For a closed subgroup $\cN$ of $\cG$, the following are equivalent.
\begin{itemize}
\item[(i)]
$\cN$ is $p$-torsion-free.
\item[(ii)]
We have $\pd_{\Lambda}(\Z_p[[\cG/\cN]]) = \dim \cN$.
\item[(iii)]
We have $\pd_{\Lambda}(\Z_p[[\cG/\cN]]) < \infty$.
\end{itemize}
\end{lem}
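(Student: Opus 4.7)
The plan is to translate the question into one about the continuous cohomology of $\cN$ via change of rings, and then invoke Lazard's classical theorem.

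First I would observe that $\Lambda = \Z_p[[\cG]]$ is topologically free on a continuous system of coset representatives as a right $\Z_p[[\cN]]$-module, hence in particular faithfully flat. Inducing a $\Z_p[[\cN]]$-projective resolution of $\Z_p$ up to $\Lambda$ then produces a $\Lambda$-projective resolution of $\Z_p[[\cG/\cN]] = \Lambda \otimes_{\Z_p[[\cN]]} \Z_p$ of the same length, while the change-of-rings adjunction furnishes, for every $\Lambda$-module $X$, the identification
\[
\Ext^i_\Lambda(\Z_p[[\cG/\cN]], X) \simeq \Ext^i_{\Z_p[[\cN]]}(\Z_p, X|_{\Z_p[[\cN]]}).
\]
Combining the induced resolution with the specialization of this identity to $X = \Z_p$ (trivial $\cG$-action) gives the sandwich
\[
\sup\{i : H^i(\cN, \Z_p) \neq 0\} \leq \pd_\Lambda(\Z_p[[\cG/\cN]]) \leq \pd_{\Z_p[[\cN]]}(\Z_p).
\]

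I would then appeal to Lazard's classical theorem on compact $p$-adic analytic groups (see \cite{Laz65}, \cite{Ser06}, \cite{Ven02}): for a compact $p$-adic Lie group $\cN$, the quantity $\pd_{\Z_p[[\cN]]}(\Z_p)$ is finite iff $\cN$ is $p$-torsion-free, in which case $\pd_{\Z_p[[\cN]]}(\Z_p) = \dim \cN$ and $H^{\dim \cN}(\cN, \Z_p) \neq 0$ (Poincar\'e duality); if instead $\cN$ has $p$-torsion, then $H^i(\cN, \Z_p) \neq 0$ for infinitely many $i$. Feeding these facts into the sandwich yields the three-way equivalence as well as the exact value $\dim \cN$ in (ii).

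The hardest part will be handling the topological (``pro-free'') structure of $\Lambda$ over $\Z_p[[\cN]]$ carefully in the infinite-index case, so that the induced complex is genuinely a projective resolution of abstract $\Lambda$-modules and the change-of-rings Ext identity holds in the abstract module category. An alternative self-contained route for the ``$p$-torsion $\Rightarrow \pd = \infty$'' direction is to pick $C \subset \cN$ of order $p$ and apply the same change-of-rings argument along $\Z_p[C] \hookrightarrow \Lambda$, reducing to the familiar fact $H^{2i}(C, \Z_p) = \Z/p$ for all $i \geq 1$.
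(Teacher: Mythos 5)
Your broad strategy coincides with the paper's: reduce to the group $\cN$ by change of rings along $\Z_p[[\cN]]\subset\Lambda$ and then invoke Lazard--Serre--Brumer (the paper simply cites Venjakob, Prop.~4.9(iii), for the equality $\pd_{\Lambda}(\Z_p[[\cG/\cN]])=\pd_{\Z_p[[\cN]]}(\Z_p)$, and Brumer for $\pd_{\Z_p[[\cN]]}(\Z_p)=\cd_p\cN$). Your upper bound via induced resolutions, the adjunction isomorphism $\Ext^i_{\Lambda}(\Z_p[[\cG/\cN]],X)\simeq\Ext^i_{\Z_p[[\cN]]}(\Z_p,X)$, and the torsion-free case are all fine: Poincar\'e duality indeed gives $H^{\dim\cN}(\cN,\Z_p)\neq 0$ (since $H^{\dim\cN+1}(\cN,\Z_p)=0$, one has $H^{\dim\cN}(\cN,\Z_p)/p\simeq H^{\dim\cN}(\cN,\F_p)\neq 0$), so your sandwich pins down (i)$\Rightarrow$(ii), and (ii)$\Rightarrow$(iii) is trivial.

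The gap is in (iii)$\Rightarrow$(i). Your sandwich bounds $\pd_{\Lambda}(\Z_p[[\cG/\cN]])$ from below only through the single test module $X=\Z_p$ with trivial action, so you need the assertion that a compact $p$-adic analytic group with $p$-torsion has $H^i(\cN,\Z_p)\neq 0$ for infinitely many $i$. That is not the classical statement: the classical fact is $\cd_p\cN=\infty$, and infinite cohomological (or projective) dimension is detected by letting the coefficient module vary (Shapiro's lemma applied to modules induced from a subgroup $C$ of order $p$), not by constant coefficients. For pro-$p$ $\cN$ (the only case the paper actually uses) your assertion is true, but by an argument you do not give: for pro-$p$ groups $\cd_p$ is detected by trivial $\F_p$-coefficients, so $H^i(\cN,\F_p)\neq 0$ for all $i$, and the Bockstein sequence then forces $H^i(\cN,\Z_p)\neq 0$ in infinitely many degrees; for general compact $\cN$ this requires a genuinely different justification. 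Your ``self-contained alternative'' does not close the gap: change of rings along $\Z_p[C]\hookrightarrow\Lambda$ governs the module $\Lambda\otimes_{\Z_p[C]}\Z_p=\Z_p[[\cG/C]]$, not $\Z_p[[\cG/\cN]]$; and if you instead mean it to show $\pd_{\Z_p[[\cN]]}(\Z_p)=\infty$, that quantity sits on the \emph{upper} side of your sandwich and yields no lower bound for $\pd_{\Lambda}(\Z_p[[\cG/\cN]])$. (Note also that you cannot reduce this direction to the pro-$p$ case by shrinking $\cG$ to a torsion-free open subgroup $\cG_0$: the groups $\cG_0\cap g\cN g^{-1}$ are then torsion-free and the restricted module has finite projective dimension over $\Z_p[[\cG_0]]$, so the torsion of $\cN$ is invisible there.) The clean repairs are either to quote the full two-sided equality $\pd_{\Lambda}(\Z_p[[\cG/\cN]])=\pd_{\Z_p[[\cN]]}(\Z_p)$ as the paper does, or, when $\cG$ is pro-$p$, to test against the residue field: flat base change gives $\operatorname{Tor}^{\Lambda}_i(\F_p,\Z_p[[\cG/\cN]])\simeq H_i(\cN,\F_p)$, and over the local ring $\Lambda$ this computes $\pd_{\Lambda}$ exactly, which both completes (iii)$\Rightarrow$(i) and recovers (ii).
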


\begin{proof}
By \cite[Proposition 4.9 (iii)]{Ven02}, we have
\[
\pd_{\Lambda}(\Z_p[[\cG/\cN]])
= \pd_{\Z_p[[\cN]]}(\Z_p).
\]
Note that this reduces the proof to the case $\cN = \cG$.

By Brumer \cite[Corollary 4.4]{Bru66} (see \cite[page 275]{Ven02} or \cite[Corollary (5.2.13)]{NSW08}), we know that $\pd_{\Z_p[[\cN]]}(\Z_p)$ is equal to the $p$-cohomological dimension $\cd_p \cN$ of $\cN$.
If $\cN$ is not $p$-torsion-free, then $\cd_p \cN = \infty$ (e.g., \cite[(3.3.1)]{NSW08}).
If $\cN$ is $p$-torsion-free, since $\cN$ is a $p$-adic Lie group, it is known to be a Poincar\'{e} group of dimension $\dim \cN$ (Lazard \cite{Laz65}, see Serre \cite[I, \S 4.5]{Ser02}), so in particular we have $\cd_p \cN = \dim \cN$.
\end{proof}

\begin{lem}\label{lem:alg3}
Let $\cN$ be a closed subgroup of $\cG$ such that $\dim \cN \geq 2$.
Then $\Z_p[[\cG/\cN]]$ is pseudo-null as a $\Lambda$-module.
\end{lem}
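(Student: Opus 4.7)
The plan is to reduce to the case where $\cG$, and hence the closed subgroup $\cN$, are pro-$p$ and $p$-torsion-free, then apply a Shapiro-type change-of-rings identity to rewrite $E^i_\Lambda(\Z_p[[\cG/\cN]])$ in terms of $\Ext^i$ over $R := \Z_p[[\cN]]$, and conclude by Poincar\'e duality for $\cN$.

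First I would choose an open normal subgroup $\cG_0$ of $\cG$ that is pro-$p$ and $p$-torsion-free. By the remark after Definition~\ref{defn:PN}, pseudo-nullity over $\Lambda$ and over $\Lambda_0 := \Z_p[[\cG_0]]$ is the same notion. As a $\Lambda_0$-module, $\Z_p[[\cG/\cN]]$ decomposes by $\cG_0$-orbits in $\cG/\cN$ as the finite direct sum
\[
\bigoplus_{g \in \cG_0\backslash\cG/\cN} \Z_p[[\cG_0/(\cG_0 \cap g\cN g^{-1})]],
\]
each summand corresponding to a closed subgroup of $\cG_0$ of dimension $\dim \cN \geq 2$. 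Hence we may replace $(\cG, \cN)$ by $(\cG_0, \cG_0 \cap g\cN g^{-1})$ and assume both are pro-$p$ and $p$-torsion-free.

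Next, the inclusion $R \hookrightarrow \Lambda$ is flat (a standard property of Iwasawa algebras: $\Lambda$ is pro-free over $R$ on right coset representatives). Tensoring a projective resolution of $\Z_p$ over $R$ with $\Lambda$ therefore yields a projective resolution of $\Lambda \otimes_R \Z_p = \Z_p[[\cG/\cN]]$ over $\Lambda$, and tensor--Hom adjunction gives the Shapiro-type identity
\[
E^i_\Lambda(\Z_p[[\cG/\cN]]) \simeq \Ext^i_R(\Z_p, \Lambda).
\]
A continuous set-theoretic section of the quotient map $\cG \to \cN\backslash\cG$ (which exists because profinite spaces are projective) identifies $\Lambda$, as an \emph{abstract} left $R$-module, with a direct product of copies of $R$ indexed by $\cN\backslash\cG$. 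Since $\Ext^i_R(\Z_p, -)$ commutes with direct products,
\[
\Ext^i_R(\Z_p, \Lambda) \simeq \prod_{\cN\backslash\cG} \Ext^i_R(\Z_p, R).
\]
Now $\cN$ is a pro-$p$, $p$-torsion-free $p$-adic Lie group of dimension $n = \dim \cN \geq 2$, so by Lazard's theorem (cited in the proof of Lemma~\ref{lem:pdZ1}) it is a Poincar\'e duality group of dimension $n$; consequently $\Ext^i_R(\Z_p, R) = 0$ for $i \neq n$. In particular, the product vanishes for $i = 0, 1$, and $\Z_p[[\cG/\cN]]$ is pseudo-null.

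The main obstacle is the last identification: $\Lambda$ is manifestly pro-free over $R$ on a basis indexed by $\cN\backslash\cG$, but pro-freeness only asserts a pseudo-compact product structure, while commuting $\Ext$ with the product requires the abstract, purely algebraic product structure. The cleanest way is to choose a Lazard ordered basis of $\cG$ refining an ordered basis of $\cN$ (possibly after first shrinking $\cG$ to a uniform open subgroup), so that the Mahler expansion identifies $\Lambda$ with the formal power series ring $R[[T_{n+1}, \ldots, T_d]]$ in $d - n = \dim \cG - \dim \cN$ transverse variables, which visibly equals $\prod_{\beta \in \N^{d-n}} R$ as an $R$-module.
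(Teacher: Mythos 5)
Your overall strategy is sound and, once unwound, coincides with the paper's: reduce to computing $\Ext$ over $R=\Z_p[[\cN]]$ by a change-of-rings identity and then use that the grade of $\Z_p$ over $R$ equals $\dim\cN\geq 2$ (Lazard/Poincar\'e duality; this is Venjakob's Corollary 4.8(i), which the paper cites). The first reduction (to $\cG_0$ pro-$p$ torsion-free, decomposing $\Z_p[[\cG/\cN]]$ by double cosets), the flatness of $R\hookrightarrow\Lambda$, the Shapiro identity $E^i_\Lambda(\Z_p[[\cG/\cN]])\simeq\Ext^i_R(\Z_p,\Lambda)$, and the commutation of $\Ext^i_R(\Z_p,-)$ with direct products are all fine. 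The gap is in the step you yourself flag as the main obstacle: the claim that, after shrinking to a uniform open subgroup, a Lazard ordered basis of $\cN$ can be extended to one of $\cG$, so that $\Lambda\simeq R[[T_{n+1},\dots,T_d]]$. This fails in general: it requires $\cN$ to be isolated (saturated) in $\cG$. For instance, take $\cG=\Z_p^3$ and $\cN=p\Z_p\times\Z_p\times\{0\}$, which has $\dim\cN=2$; the images of all elements of $\cN$ in $\cG/p\cG\simeq\F_p^3$ span only a line, so no two-element ordered basis of $\cN$ can be completed to a minimal topological generating set of $\cG$. Shrinking $\cG$ to a uniform open subgroup (your parenthetical remedy) does not address this, because $\cN$ must then be replaced by its intersection with the smaller group, and you give no argument that the intersection becomes isolated (this happens to work in the abelian case by elementary divisors, but it is exactly the kind of assertion that needs proof in general). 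So, as written, the identification of $\Lambda$ with an abstract direct product of copies of $R$ is unjustified.

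The repair is easy and makes the product decomposition unnecessary: since $R$ is noetherian and $\Z_p$ is a finitely generated $R$-module, choose a resolution $P_\bullet\to\Z_p$ by finitely generated free $R$-modules. For such $P_i$ one has $\Hom_R(P_i,\Lambda)\simeq\Hom_R(P_i,R)\otimes_R\Lambda$, and flatness of $\Lambda$ over $R$ then gives
\[
\Ext^i_R(\Z_p,\Lambda)\;\simeq\;\Ext^i_R(\Z_p,R)\otimes_R\Lambda ,
\]
which vanishes for $i=0,1$ by the same Poincar\'e-duality input, since $\Ext^i_R(\Z_p,R)=0$ for $i<\dim\cN$. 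This base-change isomorphism is precisely what the paper invokes (Venjakob, Proposition 2.7(i)), so with this fix your argument becomes an explicit version of the paper's proof; alternatively one could salvage your product argument by first replacing $\cN$ by its isolator in $\cG$ (over which $\Z_p[[\cN]]$ has finite index), but that requires an additional argument you have not supplied.
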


\begin{proof}
It is easy to show
\[
E^i_{\Lambda}(\Z_p[[\cG/\cN]])
\simeq E^i_{\Z_p[[\cN]]}(\Z_p) \otimes_{\Z_p[[\cN]]} \Z_p[[\cG]]
\]
(see \cite[Proposition 2.7(i)]{Ven02}).
This reduces the proof to the case $\cN = \cG$.
Then the lemma follows from \cite[Corollary 4.8(i)]{Ven02}; indeed, it implies that the grade $j(\Z_p)$ over $\Z_p[[\cN]]$ equals $\dim \cN$.
\end{proof}

\section{Results for totally real fields}\label{sec:real}

In this section, we prove Theorem \ref{thm:main} and its variant (Theorem \ref{thm:main2}).
We keep the notation in \S \ref{ss:intro_real}.
Set $\Lambda = \Z_p[[\cG]]$.

\subsection{Notes on the statement}\label{ss:note}

First we review some properties of $\lambda$ and $\mu$-invariants.
Recall that $\lambda_S$ and $\mu_S$ are associated to the Iwasawa module $X_S(F^{\cyc})$.
It is well-known that the Iwasawa module $X_S(F^{\cyc})$ does not have nonzero finite submodules; this is a special case of Corollary \ref{cor:PN_free2} below.
Therefore, if $\mu_S = 0$, then $X_S(F^{\cyc})$ is a free $\Z_p$-module of rank $\lambda_S$.
In particular, we have $X_S(F^{\cyc}) = 0$ if and only if $\lambda_S = \mu_S = 0$.

The following basic lemma (cf.~\cite[Corollary (11.3.6)]{NSW08}) is unnecessary for the proof of the main theorems, but it enables us to reformulate Theorem \ref{thm:main}.

\begin{lem}\label{lem:S_varies}
We have $\mu_S = \mu_{S_p}$ and
\[
\lambda_S = \lambda_{S_p} + \# \{ v \in S(F^{\cyc}) \mid \mathsf{N}(v) \equiv 1 \, (\bmod p)\},
\]
where $S(F^{\cyc})$ denotes the set of primes of $F^{\cyc}$ that are lying above $S$ and $\mathsf{N}(v)$ denotes the cardinality of the residue field at $v$.
\end{lem}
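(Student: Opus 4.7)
The plan is to compare $X_S(F^{\cyc})$ with $X_{S_p}(F^{\cyc})$ through a short exact sequence of $\Lambda$-modules, where $\Lambda = \Z_p[[\Gamma]]$ and $\Gamma = \Gal(F^{\cyc}/F)$. Since $M_{S_p}(F^{\cyc}) \subseteq M_S(F^{\cyc})$, we obtain a surjection $X_S(F^{\cyc}) \twoheadrightarrow X_{S_p}(F^{\cyc})$ whose kernel $K$ is generated by the images of the inertia subgroups at primes of $F^{\cyc}$ lying above $S \setminus S_p$. By additivity of the Iwasawa $\lambda$- and $\mu$-invariants along short exact sequences of finitely generated torsion $\Lambda$-modules, it suffices to establish $\mu(K) = 0$ and $\lambda(K) = \#\{v \in S(F^{\cyc}) \mid \mathsf{N}(v) \equiv 1 \pmod{p}\}$.

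To identify $K$, I would work layer by layer and apply local class field theory. For each prime $w$ of $F_n$ above some $v_0 \in S \setminus S_p$ (so $w \nmid p$), the image of the local inertia in $X_S(F_n)$ is a subquotient of the pro-$p$ completion of $\mathcal{O}_{F_n,w}^\times$, which is cyclic of order $(\mathsf{N}(w)-1)_p$. By lifting-the-exponent, this order grows linearly in $n$ precisely when $\mathsf{N}(v_0) \equiv 1 \pmod{p}$, and remains trivial asymptotically otherwise. Reading off the $\mu$- and $\lambda$-invariants of $K$ from the asymptotic growth of $|X_S(F_n)|_p / |X_{S_p}(F_n)|_p$ along the tower then gives the asserted formula, once one correctly interprets the counting condition $\mathsf{N}(v) \equiv 1 \pmod{p}$ on primes of $F^{\cyc}$.

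The main obstacle is that global $S$-units may cut down the image of the direct sum of local inertia in $X_S(F^{\cyc})$, so the naive identification of $K$ with $\bigoplus_w I_w$ is not precise. A careful accounting is provided by the standard exact sequence at each finite layer relating local units, global $S$-units, and class groups; passing to the limit yields the contribution formula. Equivalently, one can simply deduce the result from \cite[Corollary (11.3.6)]{NSW08}, which records precisely this type of comparison and encapsulates the local-global bookkeeping described above.
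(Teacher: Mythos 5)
Your overall skeleton is the same as the paper's: compare $X_S(F^{\cyc})$ with $X_{S_p}(F^{\cyc})$ via the canonical surjection, identify the kernel $K$ with local contributions at the primes of $S \setminus S_p$, and add up $\lambda$- and $\mu$-invariants. However, the way you propose to compute the invariants of $K$ has a genuine gap. First, the quantity $|X_S(F_n)|_p / |X_{S_p}(F_n)|_p$ is not defined: unlike class groups, the $S$-ramified Iwasawa modules $X_S(F_n)$ at finite layers are infinite (they have positive $\Z_p$-rank), so there is no ``Iwasawa growth of orders'' to read off. One could replace this by the finite group $\Gal(M_S(F_n)/M_{S_p}(F_n))$, but then extracting $\lambda(K)$ and $\mu(K)$ of the $\Lambda$-module $K$ from the asymptotics of these finite-level relative Galois groups still requires a control-type comparison between the coinvariants of $K$ and those groups, which you do not address. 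Second, and more importantly, the point you flag as ``the main obstacle''---whether global $S$-units cut down the image of the local inertia---is exactly the content of the lemma's key step, and your proposal only asserts that ``passing to the limit yields the contribution formula'' or defers to \cite[Corollary (11.3.6)]{NSW08}. Since that corollary is essentially the statement being proved (the paper itself cites it with a ``cf.''), invoking it is a citation rather than a proof.

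The paper's proof avoids all of this by working directly at the infinite level: the kernel of $X_S(F^{\cyc}) \twoheadrightarrow X_{S_p}(F^{\cyc})$ \emph{is} the direct sum, over primes $v$ of $F^{\cyc}$ above $S \setminus S_p$, of the projective limits (under norms) of the $p$-parts of the local unit groups; for a non-$p$-adic $v$ each such limit is free of $\Z_p$-rank $1$ when $\mathsf{N}(v) \equiv 1 \pmod p$ (your lifting-the-exponent observation is the reason the limit is nonzero exactly then) and is $0$ otherwise. Since the kernel is thus $\Z_p$-free of the asserted rank, one gets $\mu_S = \mu_{S_p}$ and the displayed formula for $\lambda_S$ immediately. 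To repair your argument you would either have to prove this infinite-level exactness (i.e., that the global-unit interference dies in the inverse limit), or genuinely carry out the finite-level bookkeeping with a control theorem---neither of which is present in the proposal as written.
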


\begin{proof}
We have a canonical surjective homomorphism from $X_S(F^{\cyc})$ to $X_{S_p}(F^{\cyc})$.
Its kernel is the direct sum of the projective limits of the $p$-parts of the local unit groups at primes in $S \setminus S_p$.
By computing the $\Z_p$-ranks, we obtain the lemma.
\end{proof}

Recall that $S_{\ram}^S = S_{\ram}^S(L/F^{\cyc})$ denotes the set of primes of $F^{\cyc}$ that are ramified in $L/F^{\cyc}$ and not lying above a prime in $S$.
We also write $(S_{\ram}^S)_L = S_{\ram}^S(L/F^{\cyc})_{L}$ for the set of primes of $L$ that are lying above a prime in $S_{\ram}^S$.

The following lemma is mentioned in \S \ref{ss:intro_real}.

\begin{lem}\label{lem:H_cyclic}
The following statements hold.
\begin{itemize}
\item[(1)]
Suppose $\mu_S = \lambda_S = 0$ and $\# S_{\ram}^S = 1$.
Then the unique prime $v_0 \in S_{\ram}^S$ is totally ramified in $L/F^{\cyc}$, so we have $\# (S_{\ram}^S)_L = 1$.
\item[(2)]
If $\mu_S = 0$ and $\lambda_S + \# S_{\ram}^S \leq 1$, then $\cH$ is pro-cyclic.
\end{itemize}
\end{lem}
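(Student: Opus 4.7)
The plan for both parts is to compare the maximal abelian pro-$p$ quotient $\cH^{\ab}$ with the Galois group of the largest abelian pro-$p$ extension of $F^{\cyc}$ whose ramification is allowed by the hypotheses. Let $M'$ denote the maximal abelian pro-$p$ extension of $F^{\cyc}$ unramified outside the primes of $F^{\cyc}$ lying above $S$ together with $S_{\ram}^S$, and set $Y = \Gal(M'/F^{\cyc})$. Since $L/F^{\cyc}$ is unramified outside this same set, the maximal abelian subextension of $L/F^{\cyc}$ lies in $M'$, giving a surjection $Y \twoheadrightarrow \cH^{\ab}$. Because $\cH$ is pro-$p$ (as a closed subgroup of the pro-$p$ group $\cG$), the Burnside basis theorem ensures that $\cH$ and $\cH^{\ab}$ have the same minimal number of topological generators; in particular, pro-cyclicity of $Y$ propagates to $\cH$.

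The next step is to analyze $Y$ via the exact sequence from class field theory
\[
\bigoplus_{v \in S_{\ram}^S} I_v \longrightarrow Y \longrightarrow X_S(F^{\cyc}) \longrightarrow 0,
\]
which one obtains by passing to the inverse limit of the usual ray class group sequences over the finite layers of $F^{\cyc}/F$. Each $v \in S_{\ram}^S$ is non-$p$-adic (since $S \supset S_p$), and the local maximal abelian pro-$p$ inertia at such a $v$ is pro-cyclic, coming from the inverse limit of the $p$-parts of the local unit groups at the finite layers; hence each $I_v$ is pro-cyclic. Under the hypothesis $\mu_S = 0$, the module $X_S(F^{\cyc})$ is $\Z_p$-free of rank $\lambda_S$, so $Y$ is topologically generated by at most $\lambda_S + \# S_{\ram}^S$ elements. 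When this count is $\leq 1$, as in the hypothesis of (2), $Y$ is pro-cyclic, and therefore so is $\cH$; this proves (2).

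For (1), the stronger hypothesis $\mu_S = \lambda_S = 0$ forces $X_S(F^{\cyc}) = 0$, so $Y = I_{v_0}$ is pro-cyclic. By the previous paragraph $\cH$ is pro-cyclic, hence abelian, so $L$ itself lies in $M'$ and $\cH$ is a quotient of $Y = I_{v_0}$. The image of $I_{v_0}$ in $\cH$ is precisely the inertia of $v_0$ in $L/F^{\cyc}$; since $Y \twoheadrightarrow \cH$ is surjective and $Y$ equals its own inertia at $v_0$, this inertia is all of $\cH$. Thus $v_0$ is totally ramified in $L/F^{\cyc}$, and there is a unique prime of $L$ above it, giving $\#(S_{\ram}^S)_L = 1$.

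I expect the main technical care to be needed in setting up the class field theory exact sequence over the infinite extension $F^{\cyc}$ and in verifying pro-cyclicity of each $I_v$ in the inverse-limit sense, which rests on a small computation with the $p$-parts of the local unit groups at the finite layers. Once these ingredients are in place, the rest of the argument is a transparent combination of a generator count, pro-$p$ Burnside, and chasing the surjection $Y \twoheadrightarrow \cH$.
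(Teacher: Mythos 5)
Your proof is correct, and it uses the same basic ingredients as the paper (class field theory over $F^{\cyc}$, the $\Z_p$-freeness of $X_S(F^{\cyc})$ when $\mu_S=0$, pro-cyclicity of inertia at non-$p$-adic primes in pro-$p$ extensions, and the pro-$p$ Frattini/Burnside generator argument), but the logical organization is reversed. You prove (2) first, uniformly, by introducing the auxiliary group $Y=\Gal(M'/F^{\cyc})$ and the exact sequence $\bigoplus_{v\in S_{\ram}^S} I_v \to Y \to X_S(F^{\cyc}) \to 0$, counting topological generators of $Y$, and pushing pro-cyclicity down to $\cH$ via $Y\twoheadrightarrow\cH^{\ab}$; then you deduce (1) from (2) by noting that $\cH$ pro-cyclic forces $L\subset M'$, so $Y\twoheadrightarrow\cH$ carries the inertia at $v_0$ (which is all of $Y$ since $X_S(F^{\cyc})=0$) onto the inertia of $v_0$ in $L/F^{\cyc}$. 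The paper instead proves (1) directly inside $\cH$, without introducing $Y$ and without knowing $\cH$ is abelian: $X_S(F^{\cyc})=0$ forces the inertia subgroup $I_{w_0}(L/F^{\cyc})\subset\cH$ to surject onto $\cH^{\ab}$, and Serre's Frattini-quotient criterion then gives $I_{w_0}=\cH$; part (2) is then obtained by a case split, using (1) when $\# S_{\ram}^S=1$ (total ramification at a non-$p$-adic prime forces pro-cyclicity) and the surjection $X_S(F^{\cyc})\twoheadrightarrow\cH^{\ab}$ when $S_{\ram}^S=\emptyset$. Your route buys a case-free proof of (2) at the cost of routing (1) through (2); the paper's route gives (1) without assuming $\cH$ abelian and with less machinery. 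One small point to make explicit in your write-up: the statement that $X_S(F^{\cyc})$ is $\Z_p$-free of rank $\lambda_S$ under $\mu_S=0$ uses the (standard, and also invoked in the paper) fact that $X_S(F^{\cyc})$ has no nonzero finite submodules; without it the generator count by $\lambda_S$ would fail.
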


\begin{proof}
(1)
Take a prime $w_0$ of $L$ lying above $v_0$ and let $I_{w_0}(L/F^{\cyc}) \subset \cH$ be its inertia group.
Then by $X_S(F^{\cyc}) = 0$, the natural map from $I_{w_0}(L/F^{\cyc})$ to the abelianization $\cH^{\ab}$ of $\cH$ is surjective.
This implies that $I_{w_0}(L/F^{\cyc}) = \cH$ (see, e.g., Serre \cite[I, \S 4.2, Proposition 23 bis]{Ser02}).

(2)
First, suppose $\# S_{\ram}^S = 1$.
Then by (1), $L/F^{\cyc}$ is totally ramified at the non-$p$-adic prime $v_0$, so it must be pro-cyclic.

Next, suppose $\# S_{\ram}^S = 0$.
Then since $L/F^{\cyc}$ is unramified outside $S$, the abelianization $\cH^{\ab}$ is a quotient of $X_S(F^{\cyc})$.
By the assumption, $X_S(F^{\cyc})$ is a free $\Z_p$-module of rank $\lambda_S \leq 1$, so $X_S(F^{\cyc})$ is pro-cyclic.
Therefore, $\cH^{\ab}$ is also pro-cyclic, which implies $\cH$ is already pro-cyclic (loc.~cit.).
\end{proof}

Here is a remark on the statement of Theorem \ref{thm:main} when $\cG$ is commutative.

\begin{rem}
Suppose that $\cG$ is commutative.
Also, suppose that $\cH$ is finite, as Leopoldt's conjecture predicts.
In this case, Theorem \ref{thm:main} claims that we have $X_S(L) = 0$ if and only if $X_S(F^{\cyc}) = 0$ and $\# S_{\ram}^S \leq 1$.

We can compare this with a result of Kurihara and the author \cite{KK_22} on the minimal number of generators of $X_S(L)$ over the Iwasawa algebra.
It deals with the case $S_{\ram}^S = \emptyset$ only.
In this case, \cite[Theorem 1.1]{KK_22} implies that we have $X_S(L) = 0$ if and only if $X_S(F^{\cyc}) = 0$ and $L = F^{\cyc}$.
This agrees with Theorem \ref{thm:main}.
\end{rem}

\subsection{Tate sequences}\label{ss:Tate_seq}

Let us introduce the exact sequences that play the crucial role in the proof of Theorem \ref{thm:main}.

For a finite prime $u$ of $F$, we choose a prime of $L$ lying above $u$ and write $\cG_u \subset \cG$ for the decomposition group.
Note that $\cG_u$ has an ambiguity up to inner automorphisms, but this does not matter in the subsequent argument.
We set
\[
Z_u(L) = \Z_p[[\cG/\cG_u]] = \Z_p[[\cG]] \otimes_{\Z_p[[\cG_u]]} \Z_p,
\]
which is regarded as a left module over $\Lambda = \Z_p[[\cG]]$.
Then for a finite set $T$ of finite primes of $F$, we set
\[
Z_T(L) = \bigoplus_{u \in T} Z_v(L).
\]
Moreover, when $T$ is non-empty, we define $Z_T^0(L)$ as the kernel of the natural surjective map $Z_T(L) \to \Z_p$, so we have an exact sequence
\begin{equation}\label{eq:Z0}
0 \to Z_T^0(L) \to Z_T(L) \to \Z_p \to 0.
\end{equation}

Let $S_{\ram}(L/F)$ denote the set of primes of $F$ that are ramified in $L/F$, which is assumed to be finite.
Let $S_{\infty} = S_{\infty}(F)$ denote the set of archimedean places of $F$.

Claim (1) in the next proposition is mentioned in \S \ref{ss:pf_outline}, while claim (2) is to handle the general case.
In fact, to prove the theorems in this paper, claim (1) is not necessary and (2) suffices in all cases.

\begin{prop}\label{prop:Tate_seq}
The following hold.
\begin{itemize}
\item[(1)]
Suppose $S \supset S_{\ram}(L/F)$.
Then we have an exact sequence
\[
0 \to X_S(L) \to P \to Q \to \Z_p \to 0
\]
over $\Lambda$, where $P$ and $Q$ are finitely generated $\Lambda$-modules with $\pd_{\Lambda} \leq 1$.
\item[(2)]
Let $\Sigma \supset S \cup S_{\ram}(L/F) \cup S_{\infty}$ be a finite set of places of $F$ with $\Sigma_f := \Sigma \setminus S_{\infty} \supsetneqq S$.
Then we have an exact sequence
\[
0 \to X_S(L) \to P \to Z_{\Sigma_f \setminus S}^0(L) \to 0
\]
over $\Lambda$, where $P$ is a finitely generated $\Lambda$-module with $\pd_{\Lambda} \leq 1$.
\end{itemize}
\end{prop}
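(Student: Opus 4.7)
The plan is to derive both exact sequences from a single perfect complex of $\Lambda$-modules representing an Iwasawa-theoretic Selmer complex with trivial $\Z_p(1)$-coefficients over $L$, in the spirit of the construction in \cite{GKK22}. Concretely, for a finite set $\Sigma \supset S \cup S_{\ram}(L/F) \cup S_{\infty}$ one forms a Selmer complex $C^{\bullet}$ by imposing no local condition at places of $S$ and the unramified condition at places of $\Sigma_f \setminus S$. Because $p$ is odd and $L$ is totally real, archimedean places contribute nothing, and noetherianness of $\Lambda$ together with finite $p$-cohomological dimension of the relevant Galois groups implies that $C^{\bullet}$ is perfect over $\Lambda$.

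For part (2), I would identify the cohomology of $C^{\bullet}$ via Iwasawa-theoretic Poitou--Tate / Artin--Verdier duality, obtaining $H^2(C^{\bullet}) \cong X_S(L)$ (Pontryagin-dual to $H^1(G_S(L), \Q_p/\Z_p) = \Hom(X_S(L), \Q_p/\Z_p)$) and $H^3(C^{\bullet}) \cong Z^{0}_{\Sigma_f \setminus S}(L)$. The latter identification comes from assembling the local unramified contributions $\cG$-equivariantly: each local piece at $v \in \Sigma_f \setminus S$ induces to $Z_v(L) = \Z_p[[\cG/\cG_v]]$, and the global $\Z_p$ that would appear in the ``unconstrained'' Tate sequence cancels against one contribution of the local terms, producing $Z^0$ rather than $Z$. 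The vanishing $H^1(C^{\bullet}) = 0$ follows from the weak Leopoldt conjecture for $L$, which holds because $L \supset F^{\cyc}$. A two-term projective representative of $C^{\bullet}$ concentrated in degrees $[2,3]$ then yields the short exact sequence in (2) with $P$ of $\pd_{\Lambda} \leq 1$.

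For part (1), where $S \supset S_{\ram}(L/F)$, one enlarges $S$ by a single auxiliary prime $u$ chosen so that $\cG_u$ has dimension at most $1$, and applies part (2) with $\Sigma_f = S \cup \{u\}$; the defining sequence $0 \to Z^{0}_{\{u\}}(L) \to \Z_p[[\cG/\cG_u]] \to \Z_p \to 0$ then splices with the three-term sequence of (2) to give the four-term sequence in (1) with $Q := \Z_p[[\cG/\cG_u]]$, which satisfies $\pd_{\Lambda} \leq 1$ by Lemma \ref{lem:pdZ1}. Alternatively (1) can be proved directly by the same method with $\Sigma = S$, using $R\Gamma_c(\cO_{L,S}, \Z_p(1))$ to produce $H^3 \cong \Z_p$ from the outset. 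The main obstacle will be the precise identification of $H^3(C^{\bullet})$ as $Z^{0}_{\Sigma_f \setminus S}(L)$ in the noncommutative and possibly infinite-$\cH$ setting: extending the construction of \cite{GKK22} (given there under the hypotheses that $\cG$ is commutative and $\cH$ finite) requires careful tracking of the ambiguity in the choice of primes $w \mid v$ of $L$ (absorbed by induction to $\Lambda$-modules) and of the connecting maps between local and global cohomology.
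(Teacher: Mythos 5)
Your overall strategy (Iwasawa-theoretic Selmer complexes, global duality, weak Leopoldt, then a projective representative) is the paper's strategy, and your reduction of (1) to (2) via an auxiliary prime would be fine: any finite prime $u \notin S$ unramified in $L/F$ has $\cG_u \simeq \Z_p$ (no finite prime splits completely in $F^{\cyc}/F$), so $Q := \Z_p[[\cG/\cG_u]]$ has $\pd_{\Lambda} \leq 1$ by Lemma \ref{lem:pdZ1} and splicing with $0 \to Z^0_{\{u\}}(L) \to Z_u(L) \to \Z_p \to 0$ gives (1). The genuine gap is in the core of (2). You assert that your complex has $H^2(C^{\bullet}) \cong X_S(L)$ and $H^3(C^{\bullet}) \cong Z^{0}_{\Sigma_f \setminus S}(L)$, and that a two-term projective representative in degrees $[2,3]$ ``then yields'' the sequence $0 \to X_S(L) \to P \to Z^{0}_{\Sigma_f\setminus S}(L) \to 0$ with $\pd_{\Lambda}(P) \leq 1$. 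It does not: a complex $[P^2 \to P^3]$ of projectives with those cohomology groups gives the four-term sequence $0 \to X_S(L) \to P^2 \to P^3 \to Z^{0}_{\Sigma_f\setminus S}(L) \to 0$, from which the asserted three-term sequence cannot be extracted, so the proposition as stated is not proved. (The two consequences actually used later, Corollary \ref{cor:PN_free2} and \eqref{eq:pd_Z}, would survive from the four-term version, but that is not what you claim to prove.) Moreover the identification itself is not what the relevant complex computes: in the paper's construction the complex $C_S$ has $H^i(C_S)=0$ for $i \neq 2$, and $H^2(C_S)$ is an \emph{extension} of $Z^{0}_{\Sigma_f\setminus S}(L)$ by $X_S(L)$; it is precisely this concentration in a single degree that makes $P := H^2(C_S)$ have $\pd_{\Lambda} \leq 1$.

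The second problem is the choice of local conditions and the perfectness argument. Imposing ``the unramified condition at places of $\Sigma_f \setminus S$'' is exactly what one cannot do naively: those places may be ramified in $L/F$ (this is the whole point of allowing $S \not\supset S_{\ram}(L/F)$), and there the unramified local complexes are not perfect over $\Lambda$ nor do they ``induce to $Z_v(L)$'' in the way you describe; also perfectness is not a consequence of finite $p$-cohomological dimension alone, but of finite generation of the Iwasawa cohomology, which (Fukaya--Kato) is available for $\RG_{\Iw}(F_{\Sigma}/L,\Z_p(1))$ and $\RG_{\Iw}(L_v,\Z_p(1))$ precisely because $\Sigma \supset S_{\ram}(L/F)$. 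In the paper, no condition at all is imposed at $\Sigma_f \setminus S$: one takes the cone $C_S$ of the global complex over the local complexes at $S$ only, and the modules $Z_v(L)$ enter through $H^2_{\Iw}(L_v,\Z_p(1)) \simeq \Z_p[[\cG/\cG_v]]$ in the Poitou--Tate/Artin--Verdier duality triangle; the passage from $Z_{\Sigma_f\setminus S}(L)$ to $Z^0_{\Sigma_f\setminus S}(L)$ is simply the exactness of $H^2(C_S) \to Z_{\Sigma_f\setminus S}(L) \to \Z_p \to 0$ in the resulting cohomology sequence, not a cancellation inside $H^3$. Your construction needs to be redone along these lines before the homological input \eqref{eq:pd_Z} used in the main theorem is justified.
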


\begin{proof}
At least when $\cG$ is commutative and $\cH$ is finite, claim (1) is well-known (e.g., \cite[Theorem 4.1]{KK_22} by Kurihara and the author) and claim (2) is a direct generalization of \cite[Proposition 2.11]{GKK22}.
Even in the non-commutative case, claim (1) follows from \cite[Proposition 2.13]{Ven03}.
Note that the weak Leopoldt's conjecture holds since $L$ contains $F^{\cyc}$.

Let us sketch the construction.
In case (1), set $\Sigma = S \cup S_{\infty}$ and $\Sigma_f = S$.
In both (1) and (2), let $C_S$ be a complex that is defined so that we have a triangle
\[
C_S \to \RG_{\Iw}(F_{\Sigma}/L, \Z_p(1)) \to \bigoplus_{v \in S} \RG_{\Iw}(L_v, \Z_p(1)),
\]
where $F_{\Sigma}$ denotes the maximal extension of $F$ unramified outside $\Sigma$ and $\RG_{\Iw}$ denotes the (global and local) Iwasawa cohomology complexes (we follow the notation in the author's article \cite[\S 3]{Kata_12}).
Since $\Sigma \supset S_{\ram}(L/F)$, it is known that both $\RG_{\Iw}$ are perfect (Fukaya--Kato \cite[Proposition 1.6.5]{FK06}), so $C_S$ is also perfect.
The global duality gives us a triangle
\[
 \RG_{\Iw}(F_{\Sigma}/L, \Z_p(1)) 
 \to \bigoplus_{v \in \Sigma_f} \RG_{\Iw}(L_v, \Z_p(1))
\to \RG(F_{\Sigma}/L, \Q_p/\Z_p)^{\vee}[-2],
\]
where $(-)^{\vee}$ denotes the Pontryagin dual
(see Nekov\'{a}\v{r} \cite[\S 5.4]{Nek06}; in the book the coefficient ring is assumed to be commutative, but the proof is valid for our non-commutative case).
Therefore, we obtain a triangle
\begin{equation}\label{eq:PD}
C_S \to \bigoplus_{v \in \Sigma_f \setminus S} \RG_{\Iw}(L_v, \Z_p(1)) 
\to \RG(F_{\Sigma}/L, \Q_p/\Z_p)^{\vee}[-2].
\end{equation}

In case (1), triangle \eqref{eq:PD} means $C_S \simeq \RG(F_{\Sigma}/L, \Q_p/\Z_p)^{\vee}[-3]$, so $H^2(C_S) \simeq X_S(L)$, $H^3(C_S) \simeq \Z_p$, and $H^i(C_S) = 0$ for $i \neq 2, 3$.
By taking a quasi-isomorphism $C_S \simeq [0 \to C^1 \to C^2 \to C^3 \to 0]$ with $C^1, C^2, C^3$ projective, we obtain a sequence of the desired form (cf.~\cite[Theorem 4.1]{KK_22}).

In case (2), the cohomology exact sequence of \eqref{eq:PD} yields
\[
0 \to X_S(L) \to H^2(C_S) \to Z_{\Sigma_f \setminus S}(L) \to \Z_p \to 0
\]
and $H^i(C_S) = 0$ for $i \neq 2$.
By taking a quasi-isomorphism $C_S \simeq [0 \to C^1 \to C^2 \to 0]$ with $C^1, C^2$ projective, we see that $H^2(C_S)$ satisfies $\pd_{\Lambda} \leq 1$.
By setting $P = H^2(C_S)$, we obtain a sequence of the desired form.
\end{proof}

The $S \supset S_{\ram}(L/F)$ case of the following corollary is proved in \cite[Theorem 4.5]{Ven03}.

\begin{cor}\label{cor:PN_free2}
The $\Lambda$-module $X_S(L)$ does not have nonzero pseudo-null submodules.
\end{cor}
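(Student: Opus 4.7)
The plan is to deduce the corollary as an immediate consequence of Proposition \ref{prop:Tate_seq}(2) combined with Proposition \ref{prop:PN_free}. The key point is that both propositions have already been set up exactly for this use: Proposition \ref{prop:Tate_seq}(2) embeds $X_S(L)$ into a finitely generated $\Lambda$-module $P$ of projective dimension at most $1$, and Proposition \ref{prop:PN_free} says such a $P$ cannot contain any nonzero pseudo-null submodule, so neither can its submodule $X_S(L)$.

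First I would choose a suitable auxiliary set $\Sigma$. Since $L/F$ is unramified at almost all primes, $S_{\ram}(L/F)$ is a finite set, so I can set
\[
\Sigma = S \cup S_{\ram}(L/F) \cup S_\infty \cup \{v_0\}
\]
for any finite prime $v_0$ of $F$ that does not lie in $S \cup S_{\ram}(L/F)$; such $v_0$ exists because $F$ has infinitely many finite primes. Then $\Sigma_f = \Sigma \setminus S_\infty$ strictly contains $S$, so Proposition \ref{prop:Tate_seq}(2) applies and yields an exact sequence
\[
0 \to X_S(L) \to P \to Z_{\Sigma_f \setminus S}^0(L) \to 0
\]
with $P$ finitely generated over $\Lambda$ and $\pd_\Lambda(P) \leq 1$.

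Next, given any pseudo-null $\Lambda$-submodule $M \subset X_S(L)$, I would view $M$ as a submodule of $P$ via the above injection. By Proposition \ref{prop:PN_free}, a finitely generated $\Lambda$-module of projective dimension at most $1$ has no nonzero pseudo-null submodule, so $M = 0$. This concludes the proof.

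There is essentially no genuine obstacle: the hard work lies in Proposition \ref{prop:Tate_seq}(2), whose construction via the Iwasawa-theoretic Tate/Poitou--Tate sequence already carries the content. One minor point worth checking is that Proposition \ref{prop:Tate_seq}(2) is indeed available in the generality needed here (no assumption $S \supset S_{\ram}(L/F)$), but this is exactly why claim (2) was stated in its asymmetric form and was emphasized to suffice in all cases. As an alternative, when $S \supset S_{\ram}(L/F)$ one could instead invoke Proposition \ref{prop:Tate_seq}(1) and break its four-term sequence into two short exact sequences, again obtaining an injection of $X_S(L)$ into a $\Lambda$-module of projective dimension at most $1$; but using (2) uniformly avoids splitting the argument into cases.
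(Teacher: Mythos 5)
Your proposal is correct and is essentially the paper's own argument: combine Proposition \ref{prop:Tate_seq}(2) with Proposition \ref{prop:PN_free}, the only added detail being your explicit choice of an auxiliary prime $v_0$ to ensure $\Sigma_f \supsetneqq S$, which the paper leaves implicit.
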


\begin{proof}
This immediately follows from Propositions \ref{prop:PN_free} and \ref{prop:Tate_seq}(2).
\end{proof}

In particular, $X_S(L)$ is pseudo-null if and only if $X_S(L) = 0$.
In what follows, we freely use this fact.

\subsection{Rephrasing the theorem}\label{ss:rephrase}

To prove Theorem \ref{thm:main}, it is useful to rephrase it in the following form.

\begin{thm}\label{thm:main'}
We have $X_S(L) = 0$ if and only if (exactly) one of (A), (B), and (C) holds:
\begin{itemize}

\item[(A)]
Both (A1) and (A2) hold.
\begin{itemize}
\item[(A1)]
$L = F^{\cyc}$.
\item[(A2)]
$\mu_S = 0$ and $\lambda_S = 0$.
\end{itemize}

\item[(B)]
Both (B1) and (B2) hold.
\begin{itemize}
\item[(B1)]
$S_{\ram}^S = \emptyset$ and $\cH \simeq \Z_p$.
\item[(B2)]
$\mu_S = 0$ and $\lambda_S = 1$.
\end{itemize}

\item[(C)]
Both (C1) and (C2) hold.
\begin{itemize}
\item[(C1)]
$\# (S_{\ram}^S)_L = 1$.
\item[(C2)]
$\mu_S = 0$ and $\lambda_S = 0$.
\end{itemize}

\end{itemize}
\end{thm}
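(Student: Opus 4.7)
The plan is to prove Theorem \ref{thm:main'} first and simultaneously check that it is equivalent to Theorem \ref{thm:main}. For the equivalence, by Corollary \ref{cor:PN_free2} the pseudo-nullity of $X_S(L)$ is the same as vanishing, and then I would match cases by combinatorics: when $\#S_{\ram}^S = 0$, case (i) forces $\cH$ to be a finite pro-$p$, $p$-torsion-free group, hence trivial, giving (A); case (ii) with $\lambda_S = 1$ forces $\cH$ to be $1$-dimensional, pro-$p$, and pro-cyclic (by Lemma \ref{lem:H_cyclic}(2)), hence $\cH \simeq \Z_p$, giving (B); the subcases with $\#S_{\ram}^S = 1$ (for either $\dim \cG = 1$ or $\dim \cG = 2$) collapse into (C) via Lemma \ref{lem:H_cyclic}(1), which ensures $\#(S_{\ram}^S)_L = 1$.

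For the forward direction, assuming $X_S(L) = 0$, I would invoke Proposition \ref{prop:Tate_seq}(2) with $\Sigma = S \cup S_{\ram}(L/F) \cup S_{\infty}$ (or part (1) when $S_{\ram}^S = \emptyset$). The vanishing of $X_S(L)$ turns the sequence into $P \simeq Z_{\Sigma_f \setminus S}^0(L)$, so $\pd_\Lambda(Z_{\Sigma_f \setminus S}^0(L)) \leq 1$. Splicing with \eqref{eq:Z0} yields $\pd_\Lambda(\Z_p) \leq 2$, whence Lemma \ref{lem:pdZ1} applied to $\cN = \cG$ shows $\cG$ is $p$-torsion-free and $\dim \cG \leq 2$. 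Running the long exact sequence of $\Ext^i_\Lambda(-, \Lambda)$ on \eqref{eq:Z0}, I would use that each $E^i_\Lambda(\Z_p[[\cG/\cG_v]])$ is concentrated in the single degree $\dim \cG_v$ (Poincar\'e duality for $p$-adic Lie groups, as in the proofs of Lemmas \ref{lem:pdZ1} and \ref{lem:alg3}) to force $\dim \cG_v \leq 1$ for every $v \in \Sigma_f \setminus S$ and to bound the cardinality of $\Sigma_f \setminus S$ in line with (A)--(C). Finally, to pin down $\mu_S$ and $\lambda_S$, I would descend to the cyclotomic level by applying $(-) \otimesL_\Lambda \Z_p[[\cG/\cH]]$ to the Tate sequence, obtaining the analogous Tate sequence for $F^{\cyc}/F$ and reading off the Iwasawa invariants.

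For the backward direction, I would check each case separately. In (A), $L = F^{\cyc}$, so $X_S(L) = X_S(F^{\cyc})$, which by Corollary \ref{cor:PN_free2} has no nonzero pseudo-null submodules and hence vanishes whenever $\mu_S = \lambda_S = 0$. In (C), Lemma \ref{lem:H_cyclic}(1) identifies $\cH$ with a single inertia subgroup $I_{w_0}$; combined with $X_S(F^{\cyc}) = 0$ and the Tate sequence at the cyclotomic level, a Nakayama argument applied to $X_S(L)_{\cH}$ forces $X_S(L) = 0$. In (B), with $\cH \simeq \Z_p$, $L/F^{\cyc}$ unramified outside $S$, and $X_S(F^{\cyc})$ free of $\Z_p$-rank $1$, a parallel Nakayama/descent argument against the two Tate sequences again gives $X_S(L) = 0$.

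The hard part will be the $\Ext$-sequence analysis of $Z_{\Sigma_f \setminus S}^0(L)$ in the forward direction: the four bounds on $\dim \cG$, $p$-torsion-freeness of $\cG$, $\dim \cG_v \leq 1$ for each ramified $v$, and the cardinality constraint on $\Sigma_f \setminus S$ must all be extracted simultaneously from the single constraint $\pd_\Lambda(Z_{\Sigma_f \setminus S}^0(L)) \leq 1$, and the geometry of which $E^i$-terms inject into which is delicate. A secondary subtlety is the derived descent through $\otimesL_\Lambda \Z_p[[\cG/\cH]]$ for possibly nonabelian $\cH$, where higher tor terms must be controlled to recover $\lambda_S$ exactly rather than merely up to an inequality.
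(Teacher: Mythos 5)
Your backward direction and the case-matching with Theorem \ref{thm:main} are essentially the paper's argument (Lemma \ref{lem:H_coinv} plus Nakayama), but the forward direction has a genuine error at its very first step. From $\pd_{\Lambda}(Z^0_{\Sigma_f\setminus S}(L))\le 1$ and the sequence \eqref{eq:Z0} you cannot ``splice'' to get $\pd_{\Lambda}(\Z_p)\le 2$: the exact sequence only gives $\pd_{\Lambda}(\Z_p)\le\max\{\pd_{\Lambda}(Z_{\Sigma_f\setminus S}(L)),\,1+\pd_{\Lambda}(Z^0_{\Sigma_f\setminus S}(L))\}$, and $\pd_{\Lambda}(Z_v(L))$ is finite only if the decomposition group $\cG_v$ is $p$-torsion-free (Lemma \ref{lem:pdZ1}), which is not known at this stage. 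Worse, the conclusion you draw --- $\cG$ is $p$-torsion-free and $\dim\cG\le 2$ --- is actually \emph{false} in case (C): in Example \ref{eg:Q}(C) one may take $L$ to be a finite nontrivial subextension of $M_{\{p,\ell\}}(\Q^{\cyc})/\Q^{\cyc}$, so $\cH$ is finite cyclic and nontrivial, $\cG$ has $p$-torsion, $\pd_{\Lambda}(\Z_p)=\infty$, and yet the theorem asserts $X_S(L)=0$. So your chain of deductions would wrongly exclude configurations that the theorem must allow; the whole point of the general case $S_{\ram}^S\neq\emptyset$ (flagged in \S\ref{ss:pf_outline}) is that $Z^0_{\Sigma_f\setminus S}(L)$ is not $\Z_p$ and one may not reduce to $\pd_{\Lambda}(\Z_p)\le 2$. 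The same unproven $p$-torsion-freeness also undercuts your later step: the concentration of $E^i_{\Lambda}(\Z_p[[\cG/\cG_v]])$ in degree $\dim\cG_v$ via Poincar\'e duality again presupposes that $\cG_v$ has no $p$-torsion.

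What the paper does instead is analyze $Z^0_{\Sigma_f\setminus S}(L)$ directly (Propositions \ref{prop:key} and \ref{prop:key_abs}), and the three constituent arguments are not recoverable from an $\Ext$ long exact sequence alone: (1) if some $\dim\cG_{j_0}\ge 2$, then $Z_{j_0}$ is pseudo-null (Lemma \ref{lem:alg3}) while $Z^0_J$ has no nonzero pseudo-null submodule (Proposition \ref{prop:PN_free}), forcing $Z_{j_0}\hookrightarrow\Z_p$, i.e.\ $\cG_{j_0}=\cG$, and then $Z^0_J\simeq\bigoplus_{j\neq j_0}Z_j$ pins down the other decomposition groups; (2) if $\dim\cG=1$ with $\cH$ finite nontrivial, a Tate cohomology computation $\hat H^0(\cH,Z_J)\simeq\hat H^0(\cH,\Z_p)$ isolates a single totally ramified prime; (3) if $\dim\cG=2$ with all $\dim\cG_j=1$, an $\cN$-homology and $\hat H^*(\Delta,-)$ argument shows $\cH/\cN$ is cyclic and hence $\cH\simeq\Z_p$. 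Your plan produces at best dimension bounds, not these structural outputs (procyclicity of $\cH$, uniqueness of the ramified prime of $L$). Two smaller points: there is no bound on $\#(\Sigma_f\setminus S)$ to extract --- primes unramified in $L/F^{\cyc}$ contribute harmless $Z_v$ with $\pd\le 1$, and $\Sigma$ is an arbitrary auxiliary enlargement; and for the Iwasawa invariants, the clean route is Lemma \ref{lem:H_coinv} on $X_S(L)_{\cH}$ rather than derived descent: in case (B) the coinvariants are the kernel of $X_S(F^{\cyc})\twoheadrightarrow\cH$, so vanishing corresponds to $\lambda_S=1$, a shift a naive ``read off the invariants'' would miss.
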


Let us briefly discuss the equivalence between Theorems \ref{thm:main} and \ref{thm:main'}.
The $\mu_S = 0$ does not matter at all.
We easily see that ``(A) $\Rightarrow$ (i)'' and ``(B) $\Rightarrow$ (ii).''
We also have ``(C) $\Rightarrow$ $\lambda_S = 0$ and $\# S_{\ram}^S = 1$ $\Rightarrow$ (i) or (ii),'' where the final implication holds since $\dim \cG = 2$ in (ii) can be replaced by $\dim \cG \geq 2$ by Lemma \ref{lem:H_cyclic}(2).
Next, we easily see that ``(i) and $\# S_{\ram}^S = 0$ $\Rightarrow$ (A)'' and ``(ii) and $\# S_{\ram}^S = 0$ $\Rightarrow$ (B).''
If (i) or (ii) holds and moreover $\# S_{\ram}^S = 1$, we have (C) by Lemma \ref{lem:H_cyclic}(1).

\begin{eg}\label{eg:Q}
When $F = \Q$, let us explicitly describe the situations (A), (B), and (C).
It is well-known that $\mu_{S_p} = 0$ and $\lambda_{S_p} = 0$.
Therefore, Lemma \ref{lem:S_varies} tells us $\mu_S = 0$ and what $\lambda_S$ is in general.
Without loss of generality, we assume that every $\ell \in S \setminus \{p\}$ satisfies $\ell \equiv 1  \, (\bmod p)$.
Then conditions (A), (B), and (C) are described as follows:
\begin{itemize}
\item[(A)]
$S = \{p\}$ and $L = \Q^{\cyc}$.

\item[(B)]
$S = \{p, \ell\}$ with $\ell$ non-split in $\Q^{\cyc}/\Q$, and $L = M_S(\Q^{\cyc})$.

Note that $M_S(\Q^{\cyc})$ is a $\Z_p$-extension of $\Q^{\cyc}$.

\item[(C)]
$S = \{ p \}$ and $L/\Q^{\cyc}$ is ramified at a unique non-$p$-adic prime of $L$ (or of $\Q^{\cyc}$).

For instance, this occurs if we choose a prime number $\ell$ as in (B) and take $L$ as any intermediate field of $M_{\{p, \ell\}}(\Q^{\cyc})/\Q^{\cyc}$ other than $\Q^{\cyc}$.
\end{itemize}

By Theorem \ref{thm:main'}, we have $X_S(L) \neq 0$ except for these special cases.
\end{eg}

To prove Theorem \ref{thm:main'}, we begin with observing the Galois coinvariant module $X_S(L)_{\cH}$.

\begin{lem}\label{lem:H_coinv}
If (B1) holds, then $X_S(L)_{\cH}$ is isomorphic to the kernel of the natural homomorphism $X_S(F^{\cyc}) \twoheadrightarrow \cH$.
If (C1) holds, then we have an isomorphism $X_S(L)_{\cH} \simeq X_S(F^{\cyc})$.
\end{lem}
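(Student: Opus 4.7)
The strategy is to compare $X_S(L)$ with $X_S(F^{\cyc})$ via a descent-type exact sequence. The unifying tool is the Lyndon--Hochschild--Serre five-term exact sequence attached to the group extension
\[
1 \to X_S(L) \to H \to \cH \to 1, \qquad H := \Gal(M_S(L)/F^{\cyc}),
\]
which in continuous pro-$p$ homology reads
\[
H_2(\cH, \Z_p) \to X_S(L)_{\cH} \to H^{\ab} \to \cH^{\ab} \to 0.
\]

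For (B1), the task is to identify $H^{\ab}$ with $X_S(F^{\cyc})$. Since $S_{\ram}^S = \emptyset$, both $L/F^{\cyc}$ and $M_S(L)/L$ are unramified outside $S$, so $M_S(L)/F^{\cyc}$ itself is unramified outside $S$. Its maximal abelian subextension is therefore contained in $M_S(F^{\cyc})$; conversely, the compositum $M_S(F^{\cyc}) \cdot L$ is abelian over $L$ and unramified outside $S$, hence contained in $M_S(L)$, so $M_S(F^{\cyc})$ is contained in the maximal abelian subextension. Thus $H^{\ab} \simeq X_S(F^{\cyc})$. Combined with $H_2(\cH, \Z_p) = 0$ coming from $\cH \simeq \Z_p$, the five-term sequence collapses to the stated short exact sequence $0 \to X_S(L)_{\cH} \to X_S(F^{\cyc}) \to \cH \to 0$, whose kernel description is the claimed identification.

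For (C1) the direct approach is obstructed because $M_S(L)/F^{\cyc}$ may be ramified at $v_0$. I would instead use the Tate-style exact sequence of Proposition \ref{prop:Tate_seq}(2) applied with an auxiliary $\Sigma \supset S \cup S_{\ram}(L/F) \cup S_{\infty}$, together with its analogue for $F^{\cyc}$, and take $\cH$-coinvariants. Base-change compatibility of the underlying Iwasawa-cohomology complex $C_S$ identifies the $\cH$-coinvariants of the $\Z_p[[\cG]]$-modules appearing in the $L$-sequence with the corresponding $\Z_p[[\Gamma]]$-modules in the $F^{\cyc}$-sequence (here $\Gamma = \cG/\cH$), yielding a right-exact comparison
\[
\mathrm{Tor}_1^{\Z_p[[\cG]]}\!\bigl(\Z_p[[\Gamma]],\, Z^0_{\Sigma_f \setminus S}(L)\bigr) \to X_S(L)_{\cH} \to X_S(F^{\cyc}) \to 0.
\]
The main obstacle is verifying that the Tor-term on the left contributes nothing. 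The hypothesis $\#(S_{\ram}^S)_L = 1$ forces the decomposition group at $w_0$ in $\cH$ to equal all of $\cH$, which in turn constrains the decomposition group $\cG_u \subset \cG$ at the prime $u$ of $F$ below $v_0$ so that $\cG_u \cdot \cH$ covers a prescribed subgroup of $\cG$. A direct computation with the explicit description $Z^0_{\Sigma_f \setminus S}(L) = \ker\bigl(\bigoplus_u \Z_p[[\cG/\cG_u]] \to \Z_p\bigr)$, together with the standard identification of $\mathrm{Tor}$-terms over group rings with relative group homology, then establishes the vanishing and gives the asserted isomorphism.
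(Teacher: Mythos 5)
Your treatment of case (B1) is correct and is essentially the paper's own argument in homological dress: the paper identifies $X_S(L)_{\cH}$ with $\Gal(\cM/L)$, where $\cM$ is the maximal abelian extension of $F^{\cyc}$ inside $M_S(L)$ (using that $\cH$ is pro-cyclic, which is what your appeal to $H_2(\cH,\Z_p)=0$ in the five-term sequence encodes), and then shows $\cM = M_S(F^{\cyc})$ exactly as you do.

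Case (C1), however, has a genuine gap. The displayed ``right-exact comparison'' $\mathrm{Tor}_1^{\Z_p[[\cG]]}(\Z_p[[\Gamma]], Z^0_{\Sigma_f\setminus S}(L)) \to X_S(L)_{\cH} \to X_S(F^{\cyc}) \to 0$ is not a formal consequence of base change of $C_S$. What base change actually gives (writing $P' = P_{\cH}$, using $H_1(\cH,P)=0$) is the pair of sequences $0 \to H_1(\cH, Z^0_{\Sigma_f\setminus S}(L)) \to X_S(L)_{\cH} \to P_{\cH} \to Z^0_{\Sigma_f\setminus S}(L)_{\cH} \to 0$ and $0 \to X_S(F^{\cyc}) \to P_{\cH} \to Z^0_{\Sigma_f\setminus S}(F^{\cyc}) \to 0$; surjectivity onto $X_S(F^{\cyc})$ requires in addition that $\ker(P_\cH \to Z^0_{\Sigma_f\setminus S}(L)_\cH)$ exhaust $X_S(F^{\cyc})$, i.e.\ a comparison of $Z^0_{\Sigma_f\setminus S}(L)_{\cH}$ with $Z^0_{\Sigma_f\setminus S}(F^{\cyc})$, whose discrepancy is controlled by $\cH^{\ab}$. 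That this discrepancy vanishes is precisely where (C1) enters; indeed the analogous natural map $X_S(L)_{\cH} \to X_S(F^{\cyc})$ is \emph{not} surjective under (B1) with $\cH \neq 1$ (its image is the kernel of $X_S(F^{\cyc}) \twoheadrightarrow \cH$), so your sequence cannot follow from base-change functoriality alone. The decisive computation is then deferred to an unspecified ``direct computation,'' and the group-theoretic input you extract from (C1) is too weak: one needs not merely that ``$\cG_u\cdot\cH$ covers a prescribed subgroup'' but that $\cG_{u_0} = \cG$ for the prime $u_0$ of $F$ below $v_0$. (Reason: since $v_0$ is non-$p$-adic, primes of $F^{\cyc}$ not above $p$ admit only totally ramified pro-cyclic pro-$p$ extensions locally; if $\cH \subseteq \cG_{u_0} \subsetneq \cG$, then \emph{every} prime of $F^{\cyc}$ above $u_0$ would be ramified in $L/F^{\cyc}$ with a unique prime of $L$ above it, contradicting $\#(S_{\ram}^S)_L = 1$.) Once $\cG_{u_0}=\cG$, the summand $Z_{u_0}(L)\simeq \Z_p$ splits off and $Z^0_{\Sigma_f\setminus S}(L) \simeq \bigoplus_{u\neq u_0} Z_u(L)$ is free over $\Z_p[[\cH]]$ (the remaining primes have trivial decomposition groups in $\cH$), which kills the Tor-term, identifies $Z^0_{\Sigma_f\setminus S}(L)_{\cH}$ with $Z^0_{\Sigma_f\setminus S}(F^{\cyc})$, and completes the argument; none of this is in your write-up. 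Note also that the paper's own proof of (C1) is much shorter and avoids the Tate sequence entirely: (C1) forces $L/F^{\cyc}$ to be totally ramified at $v_0$ with $\cH$ pro-cyclic, so $X_S(L)_{\cH}\simeq \Gal(\cM/L)$, and $\cM = M_S(F^{\cyc})\cdot L$ with $M_S(F^{\cyc})\cap L = F^{\cyc}$, whence restriction gives $\Gal(\cM/L)\simeq X_S(F^{\cyc})$.
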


\begin{proof}
This lemma can be deduced from the Tate sequences in Proposition \ref{prop:Tate_seq} and their functoriality, but we give a direct proof here.

In any case $\cH$ is pro-cyclic, so we have
\[
X_S(L)_{\cH} \simeq \Gal(\cM/L)
\]
if $\cM$ denotes the maximal abelian extension of $F^{\cyc}$ in $M_S(L)$.
If (B1) holds, then $\cM = M_S(F^{\cyc})$, so the claim follows.

Suppose (C1) holds and let $v_0$ be the unique prime of $F^{\cyc}$ that is ramified in $L/F^{\cyc}$.
Then $M_S(F^{\cyc})$ is the fixed subfield in $\cM$ of the inertia subgroup $I_{v_0}(\cM/F^{\cyc})$.
Since we have $I_{v_0}(\cM/F^{\cyc}) \simeq I_{v_0}(L/F^{\cyc}) = \Gal(L/F^{\cyc})$, the claim follows.
\end{proof}

By Lemma \ref{lem:H_coinv}, together with Nakayama's lemma, we immediately obtain the following.

\begin{lem}
Assuming (A1) (resp.~(B1), resp.~(C1)), we have $X_S(L) = 0$ if and only if (A2)  (resp.~(B2), resp.~(C2)) holds.
\end{lem}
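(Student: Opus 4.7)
The plan is to reduce each of the three biconditionals to a vanishing statement about $X_S(F^{\cyc})$ by computing $X_S(L)_{\cH}$ via Lemma \ref{lem:H_coinv}, and then to pass from the vanishing of the coinvariants back to the vanishing of $X_S(L)$ itself by Nakayama's lemma. First I would record the Nakayama input: since $\cG$ is pro-$p$, the ring $\Lambda = \Z_p[[\cG]]$ is a noetherian local ring with maximal ideal $(p, I_{\cG})$, and the augmentation ideal $I_{\cH} \subset \Lambda$ is contained in $I_{\cG}$, hence in the Jacobson radical. For the finitely generated $\Lambda$-module $X_S(L)$, Nakayama's lemma therefore gives $X_S(L) = 0$ if and only if $X_S(L)_{\cH} = 0$.

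For Case (A), the hypothesis $L = F^{\cyc}$ makes $\cH$ trivial, so $X_S(L) = X_S(F^{\cyc})$; by the discussion at the beginning of \S\ref{ss:note} (which uses Corollary \ref{cor:PN_free2} to rule out nonzero finite submodules), the vanishing of $X_S(F^{\cyc})$ is equivalent to (A2). For Case (C), Lemma \ref{lem:H_coinv} gives $X_S(L)_{\cH} \simeq X_S(F^{\cyc})$, so the same argument applies verbatim and recovers (C2).

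The one case that requires a little more care is Case (B). Under (B1), Lemma \ref{lem:H_coinv} identifies $X_S(L)_{\cH}$ with the kernel of a surjection $X_S(F^{\cyc}) \twoheadrightarrow \cH \simeq \Z_p$. Here I would use Corollary \ref{cor:PN_free2} once more to observe that (B2) is equivalent to $X_S(F^{\cyc}) \simeq \Z_p$ (torsion-free $\Z_p$-module of rank $1$), and then invoke the elementary fact that any surjection $\Z_p \twoheadrightarrow \Z_p$ is automatically an isomorphism: the kernel in question vanishes precisely when $X_S(F^{\cyc}) \simeq \Z_p$, i.e.\ precisely when (B2) holds. Since all the ingredients---Lemma \ref{lem:H_coinv}, Corollary \ref{cor:PN_free2}, and Nakayama's lemma---are already at hand, there is no real obstacle; the argument is essentially case-by-case bookkeeping, the only mildly subtle point being the rank-one comparison used in Case (B).
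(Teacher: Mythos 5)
Your proof is correct and is essentially the paper's own argument: the paper proves this lemma exactly by combining Lemma \ref{lem:H_coinv} with Nakayama's lemma (using that $\Lambda=\Z_p[[\cG]]$ is local since $\cG$ is pro-$p$), and your case-by-case bookkeeping, including the rank-one comparison in case (B) via the absence of finite submodules (Corollary \ref{cor:PN_free2}), just makes the same reasoning explicit.
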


Thanks to this lemma, in order to prove the full statement of Theorem \ref{thm:main'}, it is enough to show that ``$X_S(L) = 0$ $\Rightarrow$ (A1), (B1), or (C1).''
To show this implication, assuming $X_S(L) = 0$, we observe that Proposition \ref{prop:Tate_seq}(2) shows
\begin{equation}\label{eq:pd_Z}
\pd_{\Lambda} (Z_{\Sigma_f \setminus S}^0(L)) \leq 1.
\end{equation}
In fact, this property is all we need:

\begin{prop}\label{prop:key}
If $\pd_{\Lambda} (Z_{\Sigma_f \setminus S}^0(L)) \leq 1$, then one of (A1), (B1), and (C1) holds.
\end{prop}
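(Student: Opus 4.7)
The plan is to convert the homological hypothesis $\pd_\Lambda Z^0_T \leq 1$ (writing $T := \Sigma_f \setminus S$, $Z_T := \bigoplus_{u\in T}Z_u(L)$, and $Z^0_T := \ker(Z_T \twoheadrightarrow \Z_p)$) into structural constraints on $\cG$ and the decomposition groups $\cG_u$ via the tautological short exact sequence
\[
0 \to Z^0_T \to Z_T \to \Z_p \to 0,
\]
and then translate these constraints into the arithmetic conclusions (A1), (B1), and (C1).

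The central tool I would use is a \emph{pseudo-null trapping}. For each $u \in T$, the summand inclusion $Z_u \hookrightarrow Z_T$ identifies $Z^0_T \cap Z_u$ with $I_{\cG,u} := \ker(Z_u \twoheadrightarrow \Z_p)$, a $\Lambda$-submodule of $Z^0_T$. By Lemma \ref{lem:alg3}, if $\dim \cG_u \geq 2$ then $Z_u$ is pseudo-null, and hence so is $I_{\cG,u}$ as its submodule. Proposition \ref{prop:PN_free} applied to $Z^0_T$ (which by hypothesis has no nonzero pseudo-null submodule) then forces $I_{\cG,u} = 0$, equivalently $\cG_u = \cG$. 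This dichotomy---for each $u \in T$, either $\cG_u = \cG$ or $\dim \cG_u \leq 1$---is the main structural input. A supplementary projection argument, which splits off any $u_0 \in T$ with $\cG_{u_0} = \cG$ (so $Z_{u_0} = \Z_p$) to identify $Z^0_T \simeq Z_{T \setminus \{u_0\}}$, iterated with Lemma \ref{lem:pdZ1} (to exclude $p$-torsion in the smaller $\cG_u$), sharpens the dichotomy further.

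To conclude, I would split the analysis according to whether $S_{\ram}(L/F) \subset S$, equivalently $S_{\ram}^S = \emptyset$. In the affirmative case, Proposition \ref{prop:Tate_seq}(1) applies (after an auxiliary choice of $\Sigma$), yielding $\pd_\Lambda \Z_p \leq 2$; by Lemma \ref{lem:pdZ1} this forces $\cG$ to be $p$-torsion-free of dimension at most $2$, so $\cH$ is either trivial (giving (A1)) or a $p$-torsion-free pro-$p$ group of dimension $1$, i.e., $\cH \simeq \Z_p$ (giving (B1)). In the complementary case, I would take $\Sigma_f = S \cup S_{\ram}(L/F)$ so that $T$ consists entirely of primes ramified in $L/F$, apply the trapping, and combine with the arithmetic fact that each such $u$ carries a nontrivial pro-$p$ inertia $I_u \subset \cG_u$ to force $|T| = 1$ with $\cG_{u_0} = \cG$; translating this via $\#(S_{\ram}^S)_L = [\cG : \cG_{u_0}] = 1$ yields (C1).

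The hardest part, I expect, will be the final step of the second case: showing that configurations with $|T| \geq 2$ and one $u_0$ satisfying $\cG_{u_0} = \cG$ alongside some other $u_1$ with $\cG_{u_1} \simeq \Z_p \subsetneq \cG$ cannot occur. Purely homologically such configurations appear compatible with $\pd_\Lambda Z^0_T \leq 1$ (via the identification $Z^0_T \simeq Z_{T \setminus \{u_0\}}$), so ruling them out requires invoking arithmetic input, such as the compatibility of the inertia $I_{u_1}(L/F) \subset \cG_{u_1}$ with the unramified (at $u_1 \nmid p$) quotient $\cG \twoheadrightarrow \Gal(F^{\cyc}/F) = \Z_p$, in order to produce a global contradiction.
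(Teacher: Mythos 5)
Your opening move (the ``pseudo-null trapping'') is exactly the paper's first step: if $\dim \cG_u \geq 2$ then $Z_u(L)$ is pseudo-null by Lemma \ref{lem:alg3}, and Proposition \ref{prop:PN_free} applied to $Z^0_T$ forces $\cG_u = \cG$; the split-off $Z^0_T \simeq Z_{T\setminus\{u_0\}}$ plus Lemma \ref{lem:pdZ1} then trivializes the other $\cH_u := \cH \cap \cG_u$. But the proposal has a genuine gap in the ramified case: nothing in it handles the configuration where \emph{no} $u \in T$ satisfies $\cG_u = \cG$. By the trapping alone, every $\cG_u$ then has dimension $1$, and since every $u \in T$ is ramified (prime to $p$), each $\cH_u$ contains a nontrivial inertia group and is finite, so each $\cG_u$ has $p$-torsion and $\pd_\Lambda Z_u = \infty$. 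You cannot extract $\pd_\Lambda Z_u \leq 1$ from $\pd_\Lambda Z^0_T \leq 1$ here (there is no summand with $\cG_{u_0}=\cG$ to split off), so the trapping and Lemma \ref{lem:pdZ1} say nothing. Ruling this out is the actual technical heart of the paper's proof: Claims \ref{claim:3} and \ref{claim:5} use Tate cohomology --- the identity $\bigoplus_u \Z_p[\cG/\cG_u\cH]/(\#\cH_u) \simeq \Z_p/(\#\cH)$ when $\dim\cG = 1$, and a descent to $\cG/\cN$ with $\cN \simeq \Z_p$ when $\dim\cG = 2$ --- to force either some $\cG_{u_0} = \cG$ or $\cH \simeq \Z_p$ with all $\cH_u$ trivial. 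Conversely, the step you flag as hardest (some $\cG_{u_0}=\cG$ together with a ramified $u_1$ having $\cG_{u_1}\simeq\Z_p$) is in fact immediate from your own observation: the inertia at $u_1$ lies in $\cH \cap \cG_{u_1}$, which is of infinite index in $\cG_{u_1}\simeq\Z_p$ and hence trivial, contradicting ramification. So the difficulty is located in the wrong place, and the genuinely hard torsion case is missing.

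A secondary problem is the unramified case: you derive $\pd_\Lambda(\Z_p) \leq 2$ from Proposition \ref{prop:Tate_seq}(1), but that four-term sequence only yields this bound if $X_S(L) = 0$, which is \emph{not} part of the hypothesis of Proposition \ref{prop:key} (it is the statement one is trying to feed into it via \eqref{eq:pd_Z}). This case is repairable from the stated hypothesis alone: when $S \supset S_{\ram}(L/F)$, every $u \in \Sigma_f \setminus S$ is a non-$p$-adic prime unramified in $L/F^{\cyc}$, so its decomposition group in $\cH$ is trivial (the residue field of $F^{\cyc}$ at such a prime has no pro-$p$ extensions), whence $\cG_u \simeq \Z_p$, $\pd_\Lambda Z_u \leq 1$, and the sequence \eqref{eq:Z0} gives $\pd_\Lambda(\Z_p) \leq 2$ directly; Lemma \ref{lem:pdZ1} then yields (A1) or (B1). (The paper instead bounds $\dim\cG$ by passing to a $p$-torsion-free open subgroup in Claim \ref{claim:1}, which is why it still must confront the torsion cases afterwards.) As it stands, however, the proposal both assumes an unavailable vanishing in the first case and omits the decisive argument in the second.
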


This proposition will be shown in the next subsection.

\subsection{Homological argument}\label{ss:pf_main2}

We consider the following abstract situation.

Let $\cG$ be a pro-$p$, $p$-adic Lie group.
Let $\cH \subset \cG$ be a normal closed subgroup such that $\cG/\cH$ is isomorphic to $\Z_p$.
Let $\{ \cG_j\}_{j \in J}$ be a family of closed subgroups of $\cG$ with $J$ a non-empty finite set.
We suppose $\cG_j \not\subset \cH$ for any $j \in J$; note that this implies $\dim \cG_j \geq 1$ since $\cG/\cH \simeq \Z_p$.
Set $Z_j = \Z_p[[\cG/\cG_j]]$ and $Z_J = \bigoplus_{j \in J} Z_j$.
Also, let $Z_J^0$ be the kernel of the natural homomorphism $Z_J \to \Z_p$, so we have an exact sequence
\begin{equation}\label{eq:Z^0_J}
0 \to Z^0_J \to Z_J \to \Z_p \to 0.
\end{equation}
For each $j \in J$, set $\cH_j = \cH \cap \cG_j$.

The following is the abstract version of Proposition \ref{prop:key}.

\begin{prop}\label{prop:key_abs}
If $\pd_{\Z_p[[\cG]]} (Z^0_J) \leq 1$, then one of the following holds:
\begin{itemize}
\item[(a)]
$\cH$ is trivial.
\item[(b)]
$\cH_j$ is trivial for all $j \in J$, and $\cH$ is isomorphic to $\Z_p$.
\item[(c)]
$\cH$ is non-trivial, there is $j_0 \in J$ such that $\cG_{j_0} = \cG$, and $\cH_j$ is trivial for any $j \in J \setminus \{j_0\}$.
\end{itemize}
\end{prop}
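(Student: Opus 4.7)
The strategy is to apply $\RHom_{\Lambda}(-, \Lambda)$ to the short exact sequence \eqref{eq:Z^0_J}. The hypothesis $\pd_{\Lambda}(Z^0_J) \leq 1$ forces $E^i_{\Lambda}(Z^0_J) = 0$ for $i \geq 2$, so the resulting long exact sequence yields an isomorphism
\[
E^i_{\Lambda}(\Z_p) \xrightarrow{\sim} \bigoplus_{j \in J} E^i_{\Lambda}(Z_j) \qquad (i \geq 3)
\]
together with a surjection $E^2_{\Lambda}(\Z_p) \twoheadrightarrow \bigoplus_{j \in J} E^2_{\Lambda}(Z_j)$. The plan is to extract structural information by computing both sides.

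I would first reduce to the case where $\cG$ is $p$-torsion-free. Pick an open $p$-tf normal subgroup $\cG_0 \trianglelefteq \cG$; since $\Lambda$ is free of finite rank over $\Lambda_0 := \Z_p[[\cG_0]]$, any projective $\Lambda$-resolution of $Z^0_J$ remains projective over $\Lambda_0$, so $\pd_{\Lambda_0}(Z^0_J) \leq 1$. The Mackey decomposition
\[
Z_j|_{\Lambda_0} \cong \bigoplus_{g \in \cG_0 \backslash \cG / \cG_j} \Z_p[[\cG_0 / (\cG_0 \cap g \cG_j g^{-1})]]
\]
identifies $Z_J^0|_{\Lambda_0}$ with the analogous module attached to the refined family $\{\cG_0 \cap g\cG_j g^{-1}\}_{(j,g)}$ of closed subgroups of $\cG_0$; the non-containment hypothesis is preserved because $\cH$ is normal in $\cG$.

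Assuming $\cG$ is $p$-tf of dimension $d := \dim \cG$, Lazard's Poincar\'e duality combined with \cite[Proposition~2.7(i)]{Ven02} (applied as in the proofs of Lemmas \ref{lem:pdZ1} and \ref{lem:alg3}) shows that $E^i_{\Lambda}(\Z_p)$ vanishes for $i \neq d$ and has $\Z_p$-rank $1$ at $i = d$, while $E^i_{\Lambda}(Z_j)$ vanishes for $i \neq d_j := \dim \cG_j$ and has $\Z_p$-rank $|\cG/\cG_j|$ at $i = d_j$. The assumption $\cG_j \not\subset \cH$, together with $\cG/\cH \cong \Z_p$, forces $d_j \geq 1$. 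A case analysis on $d$ then concludes:
\begin{itemize}
\item if $d = 1$, then $\dim \cH = 0$ and $\cH$ is $p$-tf, so $\cH = 1$, giving (a);
\item if $d = 2$, the vanishing at $i \geq 3$ forces $d_j \leq 2$, and the $i = 2$ surjection combined with a $\Z_p$-rank comparison either yields all $d_j = 1$ (so $\cH \cong \Z_p$ and each $\cH_j = 1$, case (b)) or a unique $d_{j_0} = 2$ with $\cG_{j_0} = \cG$ and all other $d_j = 1$ (case (c));
\item if $d \geq 3$, the isomorphism and the vanishing of $E^i_{\Lambda}(\Z_p)$ for $i \neq d$ force $d_j \in \{1, d\}$; the rank comparison at $i = d$ then selects a unique $d_{j_0} = d$ with $\cG_{j_0} = \cG$, yielding (c).
\end{itemize}
In all cases, $d_j = 1$ gives $\cG_j \cong \Z_p$, hence $\cH_j = 1$.

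The main obstacle will be the reduction step: the conclusion (a), (b), or (c) for $(\cG_0, \{\cG_0 \cap g\cG_j g^{-1}\})$ does not transparently pull back to the corresponding conclusion for $(\cG, \{\cG_j\})$. For instance, (a) for $\cG_0$ yields only $\cH \cap \cG_0 = 1$, i.e., $\cH$ finite but possibly nontrivial, so additional work is needed to distinguish between (a) and (c) in the original setting. Handling this gracefully will likely require a direct pseudo-null argument for $\cG$ with $p$-torsion, using that $\Z_p$ is pseudo-null when $\dim \cG \geq 2$ (Lemma \ref{lem:alg3}) together with Proposition \ref{prop:PN_free} to bound the pseudo-null submodule of $Z_J$ inside $\Z_p$, thereby forcing the structural conclusion on the $\cG_j$'s with $\dim \cG_j \geq 2$ directly, without passing to $\cG_0$.
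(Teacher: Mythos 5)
Your argument is essentially complete only under the additional hypothesis that $\cG$ is $p$-torsion-free, and the reduction you propose to that case is exactly where the proof breaks down --- as you partly acknowledge yourself. The point is that the statement of Proposition \ref{prop:key_abs} is really a statement about torsion: restricting to an open $p$-torsion-free $\cG_0$ and the family $\{\cG_0 \cap g\cG_j g^{-1}\}$ destroys precisely the information the proposition asserts. Concretely, in the two critical configurations the restricted statement is vacuous. If $\dim \cG = 1$ and $\cH$ is finite non-trivial, the conclusion over $\cG_0$ is just case (a) for $\cH \cap \cG_0 = 1$, which is automatic and says nothing about the existence of a $j_0$ with $\cG_{j_0} = \cG$; yet producing such a $j_0$ is the whole content of case (c) here. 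If $\dim \cG = 2$ and all $\dim \cG_j = 1$, the conclusion over $\cG_0$ only gives $\cH \cap \cG_0 \simeq \Z_p$, which follows already from $\dim \cH = 1$ and does not exclude $\cH \simeq \Z_p \times (\text{finite})$; but ruling out that torsion is exactly what case (b) requires (and neither (a) nor (c) can hold in this configuration). Your proposed fallback --- using Lemma \ref{lem:alg3} and Proposition \ref{prop:PN_free} to force $\cG_{j_0} = \cG$ when some $\dim \cG_{j_0} \geq 2$ --- reproduces only the easy part of the paper's proof (Claim \ref{claim:4}) and does not touch either of these two cases.

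The paper fills precisely this gap with Tate-cohomology arguments that have no counterpart in your proposal: for $\dim \cG = 1$, the finiteness of $\pd_{\Z_p[[\cG]]}(Z^0_J)$ makes $Z^0_J$ cohomologically trivial over the finite group $\cH$, and the resulting isomorphism $\hat{H}^0(\cH, Z_J) \simeq \hat{H}^0(\cH, \Z_p)$, computed via Shapiro's lemma, forces a single $j_0$ with $\cH_{j_0} = \cH$ and $\cG_{j_0}\cH = \cG$ (hence $\cG_{j_0} = \cG$), giving (c); for $\dim \cG = 2$ with all $\dim \cG_j = 1$, one takes $\cN$-homology of \eqref{eq:Z^0_J} for an open normal $\cN \simeq \Z_p$ in $\cH$ and then Tate cohomology over $\Delta = \cH/\cN$ to show $\Delta$ is cyclic, whence $\cH \simeq \Z_p$ and (b) follows. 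By contrast, within the $p$-torsion-free setting your $\Ext$-duality computation is sound and is essentially equivalent to the paper's use of Lemmas \ref{lem:pdZ1} and \ref{lem:alg3} (Claims \ref{claim:4} and \ref{claim:1}); so the missing ingredient is not the dimension bookkeeping but the treatment of $p$-torsion, which would need arguments of the above kind to be supplied.
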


This proposition is an immediate consequence of Claims \ref{claim:4}--\ref{claim:5} below.

\begin{claim}\label{claim:4}
Suppose that $\dim \cG_{j_0} \geq 2$ for some $j_0 \in J$.
Then (c) holds.
\end{claim}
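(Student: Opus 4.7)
The plan is to combine the pseudo-nullity of $Z_{j_0}$ (from Lemma \ref{lem:alg3}) with Proposition \ref{prop:PN_free}, which forbids nonzero pseudo-null submodules inside any module of projective dimension at most $1$. Since $\dim \cG_{j_0} \geq 2$, Lemma \ref{lem:alg3} tells us that $Z_{j_0}$ is pseudo-null as a $\Lambda$-module. Viewing $Z_{j_0}$ as a direct summand of $Z_J = Z_{j_0} \oplus Z_{J \setminus \{j_0\}}$, one checks that $Z_{j_0} \cap Z_J^0$ is precisely the augmentation ideal $I_{j_0} := \Ker(Z_{j_0} \to \Z_p)$, because the defining surjection $Z_J \to \Z_p$ restricts on the $Z_{j_0}$-summand to ordinary augmentation. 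As a submodule of the pseudo-null $Z_{j_0}$, the ideal $I_{j_0}$ is itself pseudo-null; but it also lies inside $Z_J^0$, which has $\pd_{\Lambda} \leq 1$ by hypothesis. Proposition \ref{prop:PN_free} therefore forces $I_{j_0} = 0$, and since $Z_{j_0} = \Z_p[[\cG/\cG_{j_0}]]$, this is equivalent to $\cG_{j_0} = \cG$.

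With $\cG_{j_0} = \cG$ in hand, the $j_0$-summand of $Z_J$ reduces to $\Z_p$ and maps isomorphically under the surjection $Z_J \twoheadrightarrow \Z_p$. Writing $J' = J \setminus \{j_0\}$, this forces projection away from the $j_0$-coordinate to identify $Z_J^0$ with $Z_{J'} = \bigoplus_{j \in J'} Z_j$ as a $\Lambda$-module (with inverse $b \mapsto (-\aug(b), b)$). Hence $\pd_{\Lambda}(Z_j) \leq 1$ for every $j \in J'$. Lemma \ref{lem:pdZ1} then tells me that each such $\cG_j$ is $p$-torsion-free with $\dim \cG_j \leq 1$. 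Combined with the standing assumption $\cG_j \not\subset \cH$, which forces the image of $\cG_j$ in $\cG/\cH \simeq \Z_p$ to be nontrivial (hence open, hence of dimension $1$), I deduce $\dim \cG_j = 1$ and $\dim \cH_j = 0$. As a finite closed subgroup of the $p$-torsion-free group $\cG_j$, the group $\cH_j$ is finite and $p$-torsion-free, so it must be trivial. The final requirement, that $\cH$ is non-trivial, is immediate from $\dim \cG = \dim \cG_{j_0} \geq 2$.

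I do not anticipate a serious obstacle. The key insight is to recognize that, although $Z_{j_0}$ appears naturally only as a quotient of $Z_J^0$ (when $|J| \geq 2$), the direct-summand structure of $Z_J$ realizes the augmentation ideal $I_{j_0}$ as a pseudo-null \emph{submodule} of $Z_J^0$, which is exactly the shape required by Proposition \ref{prop:PN_free}. Once this identification is secured, the isomorphism $Z_J^0 \simeq Z_{J'}$ is a short calculation, and the degenerate case $|J| = 1$ (where $J'$ is empty, $Z_J^0 = I_{j_0} = 0$, and the assertion on $\cH_j$ becomes vacuous) causes no trouble.
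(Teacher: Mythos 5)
Your proof is correct and takes essentially the same route as the paper: it identifies $\Ker(Z_{j_0}\to\Z_p)$ as a pseudo-null submodule of $Z_J^0$ (via Lemma \ref{lem:alg3}), forces $\cG_{j_0}=\cG$ using Proposition \ref{prop:PN_free}, then uses $Z_J^0\simeq Z_{J\setminus\{j_0\}}$ and Lemma \ref{lem:pdZ1} to get each remaining $\cG_j\simeq\Z_p$, hence $\cH_j$ trivial. The extra details you spell out (the explicit splitting, the pro-$p$ argument for the triviality of $\cH_j$, and the non-triviality of $\cH$ from $\dim\cG\geq 2$) are exactly what the paper leaves implicit, so there is nothing to fix.
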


\begin{proof}
By $\pd_{\Z_p[[\cG]]} (Z^0_J) \leq 1$ and Proposition \ref{prop:PN_free}, the module $Z^0_J$ does not have nonzero pseudo-null submodules.
On the other hand, $Z_{j_0}$ is pseudo-null by the assumption and Lemma \ref{lem:alg3}.
Therefore, the homomorphism $Z_{j_0} \to \Z_p$ must be injective.
This means that $\cG_{j_0} = \cG$ and then we obtain
\[
Z^0_J \simeq Z_{J \setminus \{j_0\}} := \bigoplus_{j \in J \setminus \{j_0\}} Z_j.
\]
So $\pd_{\Z_p[[\cG]]} (Z^0_J) \leq 1$ implies $\pd_{\Z_p[[\cG]]}(Z_j) \leq 1$ for any $j \in J \setminus \{j_0\}$.
By Lemma \ref{lem:pdZ1}, this means $\cG_j \simeq \Z_p$, that is, $\cH_j$ is trivial.
Therefore, (c) holds.
\end{proof}

\begin{claim}\label{claim:1}
Suppose that $\dim \cG_j = 1$ for all $j \in J$.
Then we have $\dim \cG \leq 2$.
\end{claim}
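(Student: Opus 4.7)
The plan is to reduce to the $p$-torsion-free case by passage to an open subgroup, and then bound $\pd$ of $\Z_p$ using the exact sequence \eqref{eq:Z^0_J}. Concretely, I would fix an open subgroup $\cG_0 \subset \cG$ that is pro-$p$ and $p$-torsion-free (such $\cG_0$ exists by the generalities recalled at the start of Section \ref{sec:alg}) and set $\Lambda_0 = \Z_p[[\cG_0]]$. Since $\dim \cG = \dim \cG_0$, and by Lemma \ref{lem:pdZ1} applied to $\cN = \cG_0$ inside $\cG_0$ we have $\dim \cG_0 = \pd_{\Lambda_0}(\Z_p)$, it suffices to prove the bound $\pd_{\Lambda_0}(\Z_p) \leq 2$.

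I would obtain this bound by feeding two separate estimates into \eqref{eq:Z^0_J} viewed as a sequence of $\Lambda_0$-modules. First, since $\Lambda$ is free of finite rank $[\cG : \cG_0]$ over $\Lambda_0$, every projective $\Lambda$-module is $\Lambda_0$-projective, so $\pd_{\Lambda_0}(M) \leq \pd_{\Lambda}(M)$ for every $\Lambda$-module $M$; in particular $\pd_{\Lambda_0}(Z_J^0) \leq 1$ by hypothesis. Second, I would analyze each $Z_j$ as a $\Lambda_0$-module through the orbit decomposition of the transitive left $\cG$-set $\cG/\cG_j$ under $\cG_0$:
\[
Z_j \;\simeq\; \bigoplus_{g \in \cG_0 \backslash \cG / \cG_j} \Z_p[[\,\cG_0 / (\cG_0 \cap g\cG_j g^{-1})\,]],
\]
where the indexing double-coset set is finite because $\cG_0$ has finite index in $\cG$. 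Each stabilizer $\cG_0 \cap g \cG_j g^{-1}$ is open in the dimension-$1$ group $g\cG_j g^{-1}$ and is $p$-torsion-free as a subgroup of $\cG_0$, so it is isomorphic to $\Z_p$ (or trivial). Lemma \ref{lem:pdZ1} then gives $\pd_{\Lambda_0} \leq 1$ for each summand, hence $\pd_{\Lambda_0}(Z_J) \leq 1$.

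Combining these two bounds through the standard inequality $\pd(C) \leq \max(\pd(A)+1, \pd(B))$ applied to $0 \to Z_J^0 \to Z_J \to \Z_p \to 0$ then yields $\pd_{\Lambda_0}(\Z_p) \leq 2$, completing the proof.

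The main subtlety I anticipate is precisely the reason for passing to $\cG_0$: if one tried to bound $\pd_{\Lambda}(Z_J)$ directly, the argument would fail whenever some $\cG_j$ has $p$-torsion (i.e.\ $\cH_j$ is nontrivial), because then Lemma \ref{lem:pdZ1} gives $\pd_{\Lambda}(Z_j) = \infty$. The Mackey-type decomposition above is designed precisely to bypass this: after restriction to the $p$-torsion-free $\cG_0$, every stabilizer that appears is automatically $p$-torsion-free of dimension $1$.
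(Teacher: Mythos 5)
Your proof is correct and follows essentially the same route as the paper: pass to a pro-$p$, $p$-torsion-free open subgroup $\cG_0$, bound $\pd_{\Z_p[[\cG_0]]}$ of $Z_J$ and of $Z^0_J$, and then read off $\pd_{\Z_p[[\cG_0]]}(\Z_p) = \dim \cG \leq 2$ from the sequence \eqref{eq:Z^0_J} via Lemma \ref{lem:pdZ1}. The only difference is that you make explicit the double-coset decomposition of each $Z_j$ over $\cG_0$ (a detail the paper leaves implicit when asserting $\pd_{\Z_p[[\cG_0]]}(Z_j) = 1$), which is a welcome clarification rather than a deviation.
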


\begin{proof}
We take any open subgroup $\cG_0 \subset \cG$ without $p$-torsion.
By Lemma \ref{lem:pdZ1}, we have $\pd_{\Z_p[[\cG_0]]}(\Z_p) = \dim \cG$ and $\pd_{\Z_p[[\cG_0]]}(Z_j) = \dim \cG_j = 1$ for any $j \in J$.
Then \eqref{eq:Z^0_J} and $\pd_{\Z_p[[\cG_0]]} (Z^0_J) \leq 1$ show the claim.
\end{proof}

\begin{claim}\label{claim:3}
Suppose that $\dim \cG = 1$.
Then (a) or (c) holds.
\end{claim}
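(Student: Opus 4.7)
If $\cH$ is trivial, then $\cG \simeq \Z_p$ is $p$-torsion-free and case (a) is immediate, so nothing needs to be proved. Assume henceforth that $\cH$ is a non-trivial finite $p$-group (forced because $\dim \cH = 0$), and aim for case (c). Once a single $j_0 \in J$ with $\cG_{j_0} = \cG$ is produced, the rest is mechanical: such a $j_0$ gives $Z_{j_0} = \Z_p$, so the augmentation $Z_{j_0} \to \Z_p$ is an isomorphism, $Z_{j_0}$ splits off the short exact sequence, and $Z^0_J \simeq \bigoplus_{j \neq j_0} Z_j$. The assumption $\pd_{\Z_p[[\cG]]}(Z^0_J) \leq 1$ then forces $\pd_{\Z_p[[\cG]]}(Z_j) \leq 1$ for every $j \neq j_0$, hence $\cH_j = 1$ by Lemma \ref{lem:pdZ1}. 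Uniqueness of $j_0$ is automatic: any other $j_0'$ with $\cG_{j_0'} = \cG$ would give $\cH_{j_0'} = \cH \neq 1$, contradicting $\cH_{j_0'} = 1$.

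The substantive task is producing $j_0$, and my plan is to extract it from $\cH$-homology. Setting $\Lambda := \Z_p[[\cG]]$ and $\Lambda_0 := \Z_p[[\cG/\cH]] \simeq \Z_p[[T]]$, the fact that $\Lambda$ is free over $\Z_p[[\cH]]$ gives a Grothendieck spectral sequence
\[
E_2^{p,q} = \Ext^p_{\Lambda_0}\bigl(H_q(\cH, Z^0_J),\, N\bigr) \;\Longrightarrow\; \Ext^{p+q}_\Lambda(Z^0_J, N),
\]
converging to zero in total degrees $\geq 2$ for every $\Lambda_0$-module $N$. Each $H_q(\cH, Z^0_J)$ ($q \geq 1$) is finite, as seen from the homology long exact sequence of $0 \to Z^0_J \to Z_J \to \Z_p \to 0$ combined with Shapiro's identification $H_*(\cH, Z_J) = \bigoplus_j H_*(\cH_j, \Z_p)^{m_j}$ with $m_j = [\cG : \cH\cdot\cG_j]$. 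Since $\Lambda_0$ has global dimension $2$, a diagonal-by-diagonal inspection of the above spectral sequence yields $H_q(\cH, Z^0_J) = 0$ for every $q \geq 1$. Feeding this back into the long exact sequence of $\cH$-homology (and using that $(Z^0_J)_\cH$ has $\pd_{\Lambda_0} \leq 1$ hence no finite submodule, to force the connecting map $H_1(\cH, \Z_p) \to (Z^0_J)_\cH$ to vanish) yields surjectivity
\[
\bigoplus_j H_q(\cH_j, \Z_p)^{m_j} \twoheadrightarrow H_q(\cH, \Z_p) \qquad \text{for every } q \geq 1.
\]
A standard cohomological input about finite $p$-groups then forces $\cH_{j_0} = \cH$ for some $j_0$; otherwise corestrictions from a collection of proper subgroups $\cH_j \subsetneq \cH$ could not cover $H_*(\cH, \Z_p)$ in all positive degrees. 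To upgrade $\cH \subseteq \cG_{j_0}$ to $\cG_{j_0} = \cG$, suppose instead $\cG_{j_0} = \cH \cdot \langle \gamma^{p^k}\rangle$ with $k \geq 1$. Then the non-zero submodule $\ker(Z_{j_0} \to \Z_p) \subseteq Z^0_J$ is a $\Lambda_0$-module on which $\cH$ acts trivially, and any such non-zero $\Lambda$-submodule has $\pd_\Lambda = \infty$ (because $\pd_\Lambda(\Lambda_0) = \cd_p(\cH) = \infty$ by Lemma \ref{lem:pdZ1}); a short manipulation of the Ext long exact sequence with suitable coefficients then contradicts $\pd_\Lambda(Z^0_J) \leq 1$.

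The main obstacle is the cohomological input that corestrictions from proper subgroups of a non-trivial finite $p$-group cannot exhaust $H_*(\cH, \Z_p)$ in all positive degrees. For cyclic $\cH = \Z/p^n$ this is transparent, since the image of $\mathrm{Cor}_{\Z/p^{n-1}}^{\Z/p^n}$ on $H_1$ is contained in $p\Z/p^n$, and the general case follows via Tate cohomology and the identity $\mathrm{Cor} \circ \mathrm{Res} = [\cH : \cH']$ together with non-vanishing of
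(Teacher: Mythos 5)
Your reduction steps are fine (the finiteness of $\cH$, the spectral-sequence proof that $H_q(\cH,Z^0_J)=0$ for $q\geq 1$, and the mechanical deduction of (c) once some $\cG_{j_0}=\cG$ is found), but the substantive step is exactly where the proposal breaks down, in two places. First, your key input --- that joint surjectivity of the corestrictions $\bigoplus_j H_q(\cH_j,\Z_p)^{m_j}\twoheadrightarrow H_q(\cH,\Z_p)$ in \emph{all} degrees $q\geq 1$ forces some $\cH_{j_0}=\cH$ --- is not a standard fact, and your justification is missing (the text stops mid-sentence). The cyclic case is indeed transparent, but for non-cyclic $\cH$ the degree-one argument collapses: for $\cH=(\Z/p)^2$ two distinct lines already cover $H_1$, so any proof must produce obstructions in higher degrees, i.e.\ ``essential'' integral homology classes restricting to zero on the given proper subgroups; this is a genuinely deep detection-type statement about arbitrary finite $p$-groups, not something obtainable from $\mathrm{Cor}\circ\mathrm{Res}=[\cH:\cH_j]$ (that identity is vacuous here, since the index is a $p$-power acting on $p$-torsion groups). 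The structural problem is that by passing only to surjectivity in positive homological degrees (Tate degrees $\leq -2$) you have discarded precisely the degree where the obstruction is cheap: since $\pd_\Lambda(Z^0_J)\leq 1<\infty$, the module $Z^0_J$ is cohomologically trivial over the finite group $\cH$, so one gets the \emph{isomorphism} $\hat{H}^0(\cH,Z_J)\simeq\hat{H}^0(\cH,\Z_p)=\Z_p/(\#\cH)$; computing the left side by Shapiro as $\bigoplus_j \Z_p[\cG/\cG_j\cH]/(\#\cH_j)$ and comparing with the cyclic right side pins down, in one stroke, a unique $j_0$ with $\cH_{j_0}=\cH$ \emph{and} $\cG_{j_0}\cH=\cG$ (hence $\cG_{j_0}=\cG$), with all other $\cH_j$ trivial. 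This is the paper's argument, and it is strictly stronger than the surjectivity statement you retained.

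Second, even granting $\cH_{j_0}=\cH$, your upgrade to $\cG_{j_0}=\cG$ is not a proof. The assertion that $\ker(Z_{j_0}\to\Z_p)$ has infinite projective dimension does not by itself contradict $\pd_\Lambda(Z^0_J)\leq 1$: submodules of modules of finite projective dimension can perfectly well have infinite projective dimension (already the augmentation ideal inside $\Z_p[\Z/p]$ does), and this submodule is not pseudo-null either (it is $\Z_p$-free of positive rank), so Proposition \ref{prop:PN_free} does not apply; the promised ``short manipulation of the Ext long exact sequence'' is exactly what is missing. The clean way to exclude $\cG_{j_0}\subsetneq\cG$ containing $\cH$ is again the Tate-degree-zero computation: if $[\cG:\cG_{j_0}\cH]>1$, then $\hat{H}^0(\cH,Z^0_J)\neq 0$, contradicting the cohomological triviality forced by finite projective dimension. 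So the proposal, as it stands, has a genuine gap at its central step and would in any case be repaired only by reverting to the $\hat{H}^0$ argument that the paper uses.
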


\begin{proof}
Let us assume $\cH$ is non-trivial and finite, so the claim says (c) holds.
By \eqref{eq:Z^0_J} and $\pd_{\Z_p[[\cG]]} (Z^0_J) \leq 1 < \infty$, we have an isomorphism between Tate cohomology groups
\[
\hat{H}^0(\cH, Z_J) \simeq \hat{H}^0(\cH, \Z_p).
\]
For each $j \in J$, we have
\begin{align}
\hat{H}^0(\cH, Z_j) 
& \simeq \hat{H}^0(\cH, \Z_p[\cG/\cG_j])\\
& \simeq \Z_p[\cG] \otimes_{\Z_p[\cG_j \cH]} \hat{H}^0(\cH, \Z_p[\cG_j \cH/\cG_j])\\
& \simeq \Z_p[\cG] \otimes_{\Z_p[\cG_j \cH]} \hat{H}^0(\cH, \Z_p[\cH/\cH_j])\\
& \simeq \Z_p[\cG] \otimes_{\Z_p[\cG_j \cH]} \hat{H}^0(\cH_j, \Z_p).
\end{align}
Thus, we obtain
\[
\bigoplus_{j \in J} \Z_p[\cG/\cG_j \cH]/(\# \cH_j) \simeq \Z_p/(\# \cH).
\]
Therefore, there exists a single $j_0 \in J$ such that $\cH_{j_0} = \cH$ and $\cG_{j_0} \cH = \cG$, and $\cH_j$ is trivial for any other $j$.
Then we indeed have $\cG_{j_0} = \cG$.
Therefore, (c) holds.
\end{proof}

\begin{claim}\label{claim:5}
Suppose that $\dim \cG = 2$ and $\dim \cG_j = 1$ for any $j \in J $.
Then (b) holds.
\end{claim}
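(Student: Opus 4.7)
The plan is to reduce (b) to showing that $\cG$ is $p$-torsion-free. Once this is established, $\cH$ is a $p$-torsion-free pro-$p$ $p$-adic Lie group of dimension $1$, hence $\cH \cong \Z_p$; moreover each $\cH_j$ is finite (since $\cG_j/\cH_j$ embeds as an open subgroup of $\cG/\cH \cong \Z_p$, nontrivially because $\cG_j \not\subset \cH$), and a finite subgroup of $\Z_p$ is trivial, yielding the second assertion.

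Showing $\cG$ is $p$-torsion-free will proceed in two sub-steps. First, I handle the easier case where every $\cG_j$ is $p$-torsion-free: by Lemma \ref{lem:pdZ1} then $\pd_{\Lambda}(Z_j) = \dim \cG_j = 1$, so $\pd_{\Lambda}(Z_J) \leq 1$, and combined with $\pd_{\Lambda}(Z_J^0) \leq 1$ the exact sequence \eqref{eq:Z^0_J} yields $\pd_{\Lambda}(\Z_p) < \infty$, so that Lemma \ref{lem:pdZ1} again shows $\cG$ is $p$-torsion-free. The substantive sub-step, and the main obstacle, is to rule out $p$-torsion in the $\cG_j$ themselves.

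Assume for contradiction that $\cG_{j_0}$ has $p$-torsion; since $\cG_{j_0}/\cH_{j_0} \cong \Z_p$ is torsion-free, all torsion of $\cG_{j_0}$ sits inside the finite group $\cH_{j_0} \subset \cH$, and I pick a cyclic subgroup $T \subset \cH_{j_0}$ of order $p$. The key leverage is Tate cohomology. Because $\cG$ is pro-$p$ and $T$ is finite, a continuous section of $\cG \to T \backslash \cG$ exhibits $\Lambda$ as a topologically free left $\Z_p[T]$-module, so $\pd_{\Lambda}(Z_J^0) \leq 1$ descends to $\pd_{\Z_p[T]}(Z_J^0) \leq 1$; since a length-$1$ $\Z_p[T]$-projective resolution is Tate-acyclic term by term, this gives $\hat{H}^i(T, Z_J^0) = 0$ for all $i \in \Z$. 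Applying the Tate long exact sequence to \eqref{eq:Z^0_J} then yields isomorphisms $\hat{H}^i(T, Z_J) \xrightarrow{\sim} \hat{H}^i(T, \Z_p)$ for every $i$.

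In degree $0$, $\hat{H}^0(T, \Z_p) = \F_p$. On the other hand, decomposing the profinite $T$-set $\cG/\cG_j$ into orbits — singleton fixed orbits contribute $\Z_p$ with trivial action, while free $T$-orbits yield the Tate-acyclic regular representation $\Z_p[T]$ — shows that $\hat{H}^0(T, Z_j)$ is supported on the set $F_j$ of $T$-fixed cosets in $\cG/\cG_j$, with $\F_p$-dimension equal to $|F_j|$ when this is finite. The isomorphism above thus forces the total $|F_j|$ across all $j$ to be finite and equal to $1$. However, for every $g \in N_{\cH}(T)$ we have $g^{-1} T g = T \subset \cG_{j_0}$, so $g\cG_{j_0} \in F_{j_0}$; and $N_{\cH}(T)$ is open in $\cH$ because $\cH_{j_0}$ is contained in the maximal finite normal subgroup of $\cH$, on which $\cH$ acts by conjugation through a finite quotient. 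Consequently, $N_{\cH}(T)$ has dimension $1$ whereas $N_{\cH}(T) \cap \cG_{j_0} \subset \cH_{j_0}$ is finite, so the image of $N_{\cH}(T)$ in $\cG/\cG_{j_0}$ is infinite, making $F_{j_0}$ infinite and contradicting the count $1$.
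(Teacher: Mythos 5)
Your proposal is correct in outline, and it takes a genuinely different route from the paper's. The paper never confronts torsion in the $\cG_j$ directly: it fixes an open subgroup $\cN\simeq\Z_p$ of $\cH$ normal in $\cG$, takes $\cN$-homology of \eqref{eq:Z^0_J}, and then runs Tate cohomology of the finite group $\Delta=\cH/\cN$ to produce a surjection $\Delta^{\ab}\twoheadrightarrow \Z_p/(\#\Delta)$, forcing $\Delta$ to be cyclic, hence $\cH$ pro-cyclic and $\cH\simeq\Z_p$; torsion-freeness of $\cG$ and triviality of the $\cH_j$ then come for free. You instead attack $p$-torsion head-on: if some $\cH_{j_0}$ contains $T$ of order $p$, you deduce cohomological triviality of $Z^0_J$ over $\Z_p[T]$ from $\pd_{\Lambda}(Z^0_J)\leq 1$ (correct, since $\Lambda$ is free as a compact $\Z_p[T]$-module), obtain $\hat{H}^0(T,Z_J)\simeq\hat{H}^0(T,\Z_p)=\F_p$, and contradict this by exhibiting infinitely many $T$-fixed cosets in $\cG/\cG_{j_0}$; the torsion-free case is then settled by Lemma \ref{lem:pdZ1} applied to \eqref{eq:Z^0_J}. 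This is essentially the mechanism of the paper's Claim \ref{claim:3} transplanted to the two-dimensional setting, and it buys the stronger intermediate statement that $\cG$ itself is $p$-torsion-free, at the price of invoking the classification of one-dimensional groups ($p$-torsion-free pro-$p$ $p$-adic analytic of dimension one is isomorphic to $\Z_p$), which the paper's route avoids.

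Three steps should be tightened, none fatally. First, the orbit decomposition of the profinite $T$-set $\cG/\cG_j$ does not literally split $Z_j$ as a direct sum of orbit modules; the clean statement is $\hat{H}^0(T,Z_j)\simeq\varprojlim_U \hat{H}^0(T,\Z_p[\cG/U\cG_j])\simeq\varprojlim_U \F_p[(\cG/U\cG_j)^T]$ (Tate cohomology of a finite group commutes with inverse limits of compact modules), and this is infinite whenever $(\cG/\cG_{j_0})^T$ is infinite --- which is all you use. Second, your justification that $N_{\cH}(T)$ is open, namely that $\cH_{j_0}$ lies in the maximal finite normal subgroup of $\cH$, is true here but is itself an unproved claim requiring $p\geq 3$ and $\dim\cH=1$; the shortest repair is the very observation the paper uses inside its own proof of this claim: for any open subgroup $U\simeq\Z_p$ of $\cH$ normal in $\cH$ (such exist), the image of $T$ in the automorphism group $\Z_p^{\times}$ of $U$ is a finite $p$-group, hence trivial since $p\geq 3$, so $U$ centralizes $T$ and $N_{\cH}(T)\supset U$ is open. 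Third, the fact that a $p$-torsion-free pro-$p$ $p$-adic Lie group of dimension one is $\Z_p$ deserves a citation or a line of proof (e.g.\ $\cd_p\cH=1$ by Lazard--Serre, so $\cH$ is free pro-$p$, of rank at most one because $\cH^{\ab}$ has $\Z_p$-rank at most one). With these points spelled out, your argument is a valid alternative proof of the claim.
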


\begin{proof}
It is enough to show that $\cH$ is isomorphic to $\Z_p$, since then $\cG$ is $p$-torsion-free and so $\cH_j$ must be trivial for any $j \in J$.
Let $\cN$ be any open subgroup of $\cH$ such that $\cN$ is normal in $\cG$ and $\cN \simeq \Z_p$.
We shall show that the group $\Delta := \cH/\cN$ is cyclic.
Then we would deduce that $\cH$ is a pro-cyclic group (take the limit with respect to $\cN$, or consider a single $\cN$ that is contained in the Frattini subgroup of $\cH$), which shows $\cH \simeq \Z_p$, as desired.

By the assumption $\dim \cG_j = 1$, we see that $Z_j$ is finitely generated and free as a $\Z_p[[\cN]]$-module.
Also, we clearly have 
\[
(Z_J)_\cN
\simeq 
Z_{J, \cN} := \bigoplus_{j \in J} \Z_p[\cG/\cN \cG_j].
\]
Therefore, taking the $\cN$-homology of \eqref{eq:Z^0_J}, we obtain an exact sequence
\[
0 \to H_1(\cN, \Z_p) \to (Z^0_J)_{\cN} \to Z_{J, \cN} \to \Z_p \to 0
\]
over $\Z_p[[\cG/\cN]]$.
We have $H_1(\cN, \Z_p) \simeq \cN^{\ab} = \cN$.
We observe that the action of $\Delta$ on $\cN$ is trivial, simply because the automorphic group of $\cN$ is $\Z_p^{\times}$, which does not have non-trivial finite $p$-group (since $p \geq 3$).
By $\pd_{\Z_p[[\cG]]} (Z^0_J) \leq 1$, we have $\pd_{\Z_p[[\cG/\cN]]}((Z^0_J)_{\cN}) \leq 1$ as well.
Then, taking the $\Delta$-cohomology, we have a long exact sequence
\[
\cdots \to \hat{H}^{i+1}(\Delta, \cN)
\to \hat{H}^i(\Delta,  Z_{J, \cN})
\to \hat{H}^{i}(\Delta, \Z_p)
\to \cdots
\]
In particular, we obtain
\[
\hat{H}^{-2}(\Delta, \Z_p)
\to \hat{H}^{0}(\Delta, \cN)
\to \hat{H}^{-1}(\Delta, Z_{J, \cN}).
\]
This is identified with $\Delta^{\ab} \to \Z_p/(\# \Delta) \to 0$.
Therefore, we conclude that $\Delta$ is a cyclic group, as claimed.
\end{proof}

This completes the proof of Proposition \ref{prop:key_abs}.
This also completes the proof of Theorem \ref{thm:main'} and, equivalently, of Theorem \ref{thm:main}.

\subsection{A variant}\label{ss:variant}

In this subsection, we obtain a variant of Theorem \ref{thm:main} that is rather easier.

Suppose we are given a totally real field $F'$ that is a finite abelian extension of $F$ whose degree is prime to $p$.
Set $L' = F' L$, so we have a natural isomorphism $\Gal(L'/L) \simeq \Gal(F'/F)$, which we write $\Delta$.
Let $\psi$ be a (totally even) character of $\Delta$.
Set $\cO_{\psi} = \Z_p[\Imag(\psi)]$ and we regard this as a $\Z_p[\Delta]$-algebra via $\psi$.
For a $\Z_p[[\Gal(L'/F)]]$-module $M$, we consider the $\cO_{\psi}[[\cG]]$-module
\[
M^{\psi} := \cO_{\psi} \otimes_{\Z_p[\Delta]} M.
\]
This functor $(-)^{\psi}$ is exact since the order of $\Delta$ is prime to $p$.
When $\psi$ is the trivial character, we may identify $X_S(L')^{\psi}$ with $X_S(L)$, which we studied in Theorem \ref{thm:main}.

To state the result, we introduce the following:
\begin{itemize}
\item
Let $S_{\ram}^{S, \psi} = S_{\ram}^{S, \psi}(L'/F^{\cyc})$ be the set of primes of $F^{\cyc}$ that are ramified in $L/F^{\cyc}$, totally split in $F^{\cyc, \psi}/F^{\cyc}$, and not lying above $S$.
Here, $F^{\cyc, \psi}$ denotes the extension of $F^{\cyc}$ cut out by $\psi$, that is, the extension corresponding to the kernel of $\psi$.
\item
Let $\lambda_S^{\psi}$, $\mu_S^{\psi}$ be the Iwasawa $\lambda$, $\mu$-invariants of $X_S(F^{\prime, \cyc})^{\psi}$.
\end{itemize}

\begin{thm}\label{thm:main2}
Suppose $\psi$ is non-trivial.
Then the $\cO_{\psi}[[\cG]]$-module $X_S(L')^{\psi}$ is pseudo-null if and only if $\lambda_S^{\psi} = 0$, $\mu_S^{\psi} = 0$, and $\# S_{\ram}^{S, \psi} = 0$.
\end{thm}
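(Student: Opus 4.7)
The plan is to adapt the proof of Theorem~\ref{thm:main'} to the $\psi$-isotypic component, exploiting the fact that $\psi \ne 1$ significantly simplifies the structure. Since $\gcd(\#\Delta, p) = 1$, we have $\Gal(L'/F) \simeq \cG \times \Delta$, and the functor $(-)^\psi$ is exact. Applying it to the Tate sequence of Proposition~\ref{prop:Tate_seq}(2) for $L'/F$ gives
\[
0 \to X_S(L')^\psi \to P^\psi \to Z^0_{\Sigma_f \setminus S}(L')^\psi \to 0
\]
over $\cO_\psi[[\cG]]$, with $\pd_{\cO_\psi[[\cG]]}(P^\psi) \le 1$. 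The key simplification is that $\Z_p^\psi = 0$, so the exact sequence \eqref{eq:Z0} yields $Z^0_{\Sigma_f \setminus S}(L')^\psi \simeq Z_{\Sigma_f \setminus S}(L')^\psi$. A direct computation (using $\cG'_u \simeq \cG_u \times \Delta_u$ up to conjugation, where $\Delta_u \subset \Delta$ is the image of the decomposition group) shows
\[
Z_u(L')^\psi \simeq \begin{cases} \cO_\psi[[\cG/\cG_u]] & \text{if } \psi|_{\Delta_u} = 1, \\ 0 & \text{otherwise}, \end{cases}
\]
so the middle term is a direct sum $\bigoplus_{u \in J} \cO_\psi[[\cG/\cG_u]]$ over a subset $J \subset \Sigma_f \setminus S$ corresponding (at the level of primes of $F^{\cyc}$) to $S_{\ram}^{S, \psi}$.

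By the analog of Proposition~\ref{prop:PN_free} over $\cO_\psi[[\cG]]$, $X_S(L')^\psi$ has no nonzero pseudo-null submodule, so pseudo-nullity reduces to vanishing. For the ``if'' direction, assume $\lambda_S^\psi = \mu_S^\psi = 0$ and $\# S_{\ram}^{S,\psi} = 0$. The first gives $X_S(F^{\prime,\cyc})^\psi = 0$; the second ensures that every prime of $F^{\cyc}$ ramified in $L/F^{\cyc}$ but not above $S$ has non-trivial $\psi$ on its $\Delta$-decomposition, so the inertia contribution to the $\psi$-component of $X_S(L')_{\cH}$ vanishes. An analog of Lemma~\ref{lem:H_coinv} then gives $X_S(L')^\psi_{\cH} = 0$, and Nakayama's lemma (applicable since $\cO_\psi[[\cG]]$ is local and $I_\cH$ lies in its maximal ideal) yields $X_S(L')^\psi = 0$.

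For the ``only if'' direction, assume $X_S(L')^\psi = 0$, so $P^\psi \simeq \bigoplus_{u \in J} \cO_\psi[[\cG/\cG_u]]$ has projective dimension $\le 1$. Comparison with the analogous Tate sequence for $F^{\prime,\cyc}/F$ over $\cO_\psi[[\Gamma]]$ (with $\Gamma = \Gal(F^{\cyc}/F)$), obtained via $\cH$-derived coinvariants, should yield $X_S(F^{\prime,\cyc})^\psi = 0$ (hence $\lambda_S^\psi = \mu_S^\psi = 0$) and, crucially, $J = \emptyset$ (hence $\# S_{\ram}^{S,\psi} = 0$). The main obstacle is this last step: one must exclude the analogs of the degenerate cases (B) and (C) of Theorem~\ref{thm:main'}. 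These arose in the $\psi=1$ setting from the presence of the $\Z_p$ quotient in the Tate sequence, which allowed a single summand in $Z^0$ to be absorbed; since $\Z_p^\psi = 0$, no such absorption is possible, and any nonempty $J$ should force $P^\psi$ to be too large to be compatible with the vanishing of $X_S(F^{\prime,\cyc})^\psi$ at the cyclotomic level.
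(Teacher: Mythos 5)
Your setup and your ``if'' direction follow the paper: take the $\psi$-component of the Tate sequence of Proposition \ref{prop:Tate_seq}(2) for $L'/F$, note that $\Z_p^{\psi}=0$ kills the augmentation quotient, deduce from Proposition \ref{prop:PN_free} that pseudo-nullity equals vanishing, and get the ``if'' implication from $(X_S(L')^{\psi})_{\cH}\simeq X_S(F'^{\cyc})^{\psi}$ plus Nakayama. Up to that point the proposal is sound and matches the paper's proof.

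The ``only if'' direction, however, contains a genuine gap, and you flag it yourself: the claim that $X_S(L')^{\psi}=0$ forces $J=\emptyset$ (i.e.\ $\# S_{\ram}^{S,\psi}=0$) is only asserted (``should yield'', ``should force $P^{\psi}$ to be too large''), and the mechanism you propose --- passing to derived $\cH$-coinvariants and comparing with a Tate sequence over $\cO_{\psi}[[\Gamma]]$ at the cyclotomic level --- is not how the exclusion works; in fact a ``bad'' summand $Z_u(L')^{\psi}$ with $\dim\cG_u\geq 2$ is pseudo-null, hence small rather than ``too large'', so a size comparison at the cyclotomic level will not detect it. The correct argument is purely local and homological, and you already have all the ingredients: since $\Z_p^{\psi}=0$, the vanishing of $X_S(L')^{\psi}$ gives $\pd_{\cO_{\psi}[[\cG]]}\bigl(Z_{\Sigma_f\setminus S}(L')^{\psi}\bigr)\leq 1$, hence $\pd_{\cO_{\psi}[[\cG]]}\bigl(Z_u(L')^{\psi}\bigr)\leq 1$ for every summand. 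If $u$ lies below a prime of $S_{\ram}^{S,\psi}$, then $\psi$ is trivial on the decomposition group, so by your own computation $Z_u(L')^{\psi}\simeq\cO_{\psi}[[\cG/\cG_u]]\neq 0$, and the (obvious variant of) Lemma \ref{lem:pdZ1} forces $\cG_u$ to be $p$-torsion-free of dimension $\leq 1$, i.e.\ $\cG_u\simeq\Z_p$; since ramification in $L/F^{\cyc}$ would put a nontrivial pro-$p$ subgroup inside $\cG_u\cap\cH$ (making $\cG_u$ either $p$-torsion or of dimension $\geq 2$), this is a contradiction, so $\# S_{\ram}^{S,\psi}=0$. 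Once that is established, the coinvariant isomorphism you already invoked gives $X_S(F'^{\cyc})^{\psi}\simeq(X_S(L')^{\psi})_{\cH}=0$, i.e.\ $\lambda_S^{\psi}=\mu_S^{\psi}=0$, so no separate cyclotomic-level comparison is needed. With this replacement for the final step, the proof is complete and coincides with the paper's.
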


Note that unlike Theorem \ref{thm:main}, we do not have an upper bound on $\dim \cG$.

\begin{proof}
We use the exact sequence in Proposition \ref{prop:Tate_seq}(2) applied to $L'/F$ instead of $L/F$.
Then we still see that $X_S(L')^{\psi}$ does not have nonzero pseudo-null submodules.
Therefore, the pseudo-nullity of $X_S(L')^{\psi}$ is equivalent to its vanishing.

An easy step is to observe that, assuming $\# S_{\ram}^{S, \psi} = 0$, we have $X_S(L')^{\psi} = 0$ if and only if $X_S(F^{\prime, \cyc})^{\psi} = 0$.
To show this, we only have to observe $(X_S(L')^{\psi})_{\cH} \simeq X_S(F^{\prime, \cyc})^{\psi}$ and use Nakayama's lemma.

Now we only have to show $\# S_{\ram}^{S, \psi} = 0$, assuming $X_S(L')^{\psi} = 0$.
By $X_S(L')^{\psi} = 0$ and the non-triviality of $\psi$, Proposition \ref{prop:Tate_seq}(2) shows
\[
\pd_{\cO_{\psi}[[\cG]]}(Z_u(L')^{\psi}) \leq 1
\]
for any $u \not \in S$.
This is a counterpart of \eqref{eq:pd_Z}.
We have $Z_u(L')^{\psi} \neq 0$ if and only if $\psi$ is trivial on $\cG_u$.
In this case, by (the obvious variant of) Lemma \ref{lem:pdZ1}, the property $\pd_{\cO_{\psi}[[\cG]]}(Z_u(L')^{\psi}) \leq 1$ means $\cG_u \simeq \Z_p$, that is, $u$ is unramified in $L/F^{\cyc}$.
Thus, $\# S_{\ram}^{S, \psi} = 0$ holds.
\end{proof}

\section{Results for CM-fields}\label{ss:duality}

We keep the notation in \S \ref{ss:intro_CM}.
In \S \ref{ss:pf_CM}, we prove Theorem \ref{thm:main_CM} by using Theorems \ref{thm:main} and \ref{thm:main2}.
In \S \ref{ss:HS}, we compare Theorem \ref{thm:main_CM} with the work of Hachimori--Sharifi \cite{HS05}.

\subsection{Proof of Theorem \ref{thm:main_CM}}\label{ss:pf_CM}

We set $L' = \cL(\mu_p)^+$ and $F' = \cF(\mu_p)^+$.
Let $\chi$ be the non-trivial character of $\Gal(\cF/F)$.
Let $\omega$ be the Teichm\"{u}ller character of $\Gal(\cF(\mu_p)/F)$.
Set $\psi = \omega \chi^{-1}$.
Then $\psi$ is a totally real character of $\Gal(\cF(\mu_p)/F)$, so it factors through $\Gal(F'/F)$.
Note that $\psi$ is trivial if and only if $\cF = F(\mu_p)$, that is, $\delta = 1$.

\begin{prop}\label{prop:lm_dual}
The following statements hold.
\begin{itemize}
\item[(1)]
We have $\lambda^- = \lambda_{S_p}^{\psi}$ and $\mu^- = \mu_{S_p}^{\psi}$, where $\lambda_{S_p}^{\psi}$ and $\mu_{S_p}^{\psi}$ are as in \S \ref{ss:variant}.
\item[(2)]
The $\Z_p[[\cG]]$-module $X(\cL)^-$ is pseudo-null if and only if the $\Z_p[[\cG]]$-module $X_{S_p}(L')^{\psi}$ is pseudo-null.
\end{itemize}
\end{prop}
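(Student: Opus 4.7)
The plan is to establish, by a classical Kummer / Poitou--Tate duality argument, a pseudo-isomorphism of $\Z_p[[\cG]]$-modules
\[
X(\cL)^-(1) \sim X_{S_p}(L')^{\psi}
\]
together with its analogue at the $\cF^{\cyc}$-level. Part (1) then follows from the cyclotomic-level statement: twisting by $\Z_p(1)$ preserves Iwasawa $\lambda$- and $\mu$-invariants, and a pseudo-isomorphism of Iwasawa modules over a one-variable ring does too. Part (2) follows from the $\cL$-level statement: neither $X(\cL)^-$ nor $X_{S_p}(L')^{\psi}$ admits nonzero pseudo-null submodules (Corollary \ref{cor:PN_free2} for the latter, and a dual Tate-sequence argument for the former), so a pseudo-isomorphism between them is the same as an equivalence of pseudo-nullities.

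To produce the pseudo-isomorphism, first pass to $\cL^{*} := \cL(\mu_p)$. This is a pro-$p$, $p$-adic Lie CM-extension of $\cF^{*} := \cF(\mu_p)$ with $(\cL^{*})^+ = L'$, and $\Delta^{*} := \Gal(\cL^{*}/\cL) \simeq \Gal(L'/L)$ has order prime to $p$. Apply the Iwasawa-theoretic Poitou--Tate duality (Nekov\'{a}\v{r} \cite[\S 5.4]{Nek06}) to $\RG_{\Iw}(\cL^{*}_{S}/\cL^{*}, \Z_p(1))$, exactly as in the construction of the Tate sequences in Proposition \ref{prop:Tate_seq}. Extracting $H^2$ and taking the minus part with respect to complex conjugation on $\cL^{*}$—noting that complex conjugation acts as $-1$ on $\Z_p(1)$, so the minus part of $X(\cL^{*})(1)$ is naturally a ``plus'' module compatible with the real side—then yields, after absorbing pseudo-null contributions from local cohomology at $p$-adic primes (justified by Lemma \ref{lem:alg3}), a pseudo-isomorphism $X(\cL^{*})^-(1) \sim X_{S_p}(L')$ of $\Z_p[[\Gal(\cL^{*}/F)]]$-modules.

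Next I would extract the relevant $\Delta^{*}$-eigencomponents. Since $|\Delta^{*}|$ is prime to $p$, the trivial $\Delta^{*}$-component of $X(\cL^{*})^-$ equals $X(\cL)^-$. Carefully tracking the action of $\Delta^{*}$ through the Tate twist by $\Z_p(1)$ (which contributes $\omega$) and through the minus-part extraction (which contributes $\chi$) identifies this with the $\omega \chi^{-1} = \psi$ eigencomponent on the right-hand side, giving the pseudo-isomorphism displayed above. The identical argument at the $\cF^{\cyc}$-level—using $\cF^{*, \cyc}$ in place of $\cL^{*}$—produces the cyclotomic pseudo-isomorphism needed for (1).

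The main obstacle will be carrying out the final character-matching cleanly, especially in the case $\delta = 0$ where $\Delta^{*}$ is nontrivial and one must keep track simultaneously of the Tate twist, the minus-part projection, and the descent along the prime-to-$p$ extension $\cL^{*}/\cL$. A secondary technical point is verifying pseudo-nullity of the decomposition-group boundary terms arising from the Poitou--Tate long exact sequence: for $p$-adic primes in the CM setting the relevant decomposition groups in $\cG$ typically have $p$-adic dimension $\geq 2$, so Lemma \ref{lem:alg3} should apply; otherwise they need to be cancelled against analogous terms on the other side of the duality.
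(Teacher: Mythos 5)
Your overall strategy---relating $X(\cL)^-$ to $X_{S_p}(L')^{\psi}$ via Iwasawa-theoretic Poitou--Tate/Artin--Verdier duality, with part (1) being the cyclotomic-level instance---is the same as the paper's, and part (1) itself is unproblematic (it is the classical statement \cite[Corollary (11.4.4)]{NSW08}). The gap is in part (2), at precisely the two points you flag as ``obstacles'' but leave unresolved. First, the duality does \emph{not} yield a pseudo-isomorphism of $\Z_p[[\cG]]$-modules $X(\cL)^-(1)\sim X_{S_p}(L')^{\psi}$. What it yields is a quasi-isomorphism of complexes: writing $\cC_{S_p}$ for the Selmer complex whose $H^2$ is $X_{S_p}(L')^{\psi}$ after taking $\psi$-parts, and $\cC$ for the Iwasawa cohomology complex $\RG_{\Iw}(\cL_{\Sigma}/\cL(\mu_p),\Z_p(1))$, one gets $\cC_{S_p}^{\psi}\simeq \cC^{\chi}(-1)^{*,\iota}[-3]$ with $(-)^*=\RHom_{\Lambda}(-,\Lambda)$. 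Hence $X_{S_p}(L')^{\psi}$ is expressed, through the hyper-Ext spectral sequence, in terms of the modules $E^i_{\Lambda}(H^j(\cC^{\chi}))$ (with the involution $\iota$ and a twist), not in terms of $H^2(\cC^{\chi})$ itself. Over a one-variable Iwasawa algebra one can pass from $E^1(M)$ back to $M$ up to pseudo-isomorphism, which is why the cyclotomic statement in (1) works; but for noncommutative $\cG$ with $\dim\cG\geq 2$ there is no such general principle, so your module-level pseudo-isomorphism is unjustified. What is actually needed---and what the paper proves instead---is the weaker assertion that if a perfect complex has all cohomology pseudo-null, then so does its dual $(-)^*$; this follows from the Auslander regularity of $\Lambda$ (the grade bound $j(E^i(M))\geq i$) together with the stability of pseudo-nullity under subquotients and extensions \cite[Propositions 3.5 and 3.6]{Ven02}. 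If you insist on a pseudo-isomorphism of modules, you would have to supply a noncommutative substitute for ``the adjoint is pseudo-isomorphic to the module,'' which is not available.

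Second, your handling of the local terms at $p$-adic primes fails as stated: the decomposition groups $\cG_v$ for $v\in S_p$ need not have dimension $\geq 2$ even when $\dim\cG\geq 2$ (the $p$-adic primes may split completely in $\cL/\cF^{\cyc}$), so Lemma \ref{lem:alg3} does not let you ``absorb'' them as pseudo-null. These terms must be matched on the two sides, and this is where the split Iwasawa module $X'(\cL)$ enters: the duality triangle gives an exact sequence $0\to X'(\cL)^-\to H^2(\cC^{\chi})\to Z^0_{S_p}(\cL)^-\to 0$, while $0\to Z_{S_p}(\cL)^-\to X(\cL)^-\to X'(\cL)^-\to 0$ holds by \cite[Lemma (11.4.9)]{NSW08}; since $(\Z_p)^-=0$ forces $Z^0_{S_p}(\cL)^-=Z_{S_p}(\cL)^-$, pseudo-nullity of $X(\cL)^-$ is equivalent to that of $H^2(\cC^{\chi})$ with no pseudo-nullity needed for the local terms. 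Note also that the auxiliary cohomology $H^3(\cC_{S_p}^{\psi})\simeq(\Z_p)^{\psi}$ and $H^1(\cC^{\chi})$ are pseudo-null only when $\dim\cG\geq 2$, so the case $\dim\cG=1$ must be treated separately (there the claim reduces directly to part (1)). Without these two repairs your argument does not go through in the higher-dimensional case, which is the case of interest.
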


\begin{proof}
Claim (1) is well-known (see, e.g., \cite[Corollary (11.4.4)]{NSW08}).
Even claim (2) might be known to experts, but the author has not found a reference (a close one is \cite[Theorem 4.9]{Ven03}), so we include the proof here.

As in the proof of Proposition \ref{prop:Tate_seq}, we consider the Selmer complex $\cC_{S_p}$ over the Iwasawa algebra $\wtil{\Lambda} := \Z_p[[\Gal(\cL'/F)]]$, defined by a triangle
\[
\cC_{S_p} 
\to \RG_{\Iw}(\cL_{\Sigma}/\cL', \Z_p(1)) 
\to \bigoplus_{v \in S_p} \RG_{\Iw}(\cL'_v, \Z_p(1))
\]
with $\Sigma = S_p \cup S_{\infty}$.
We do not care whether this complex is perfect or not.
Set $\cC = \RG_{\Iw}(\cL_{\Sigma}/\cL', \Z_p(1))$.
Then the Artin--Verdier duality implies $\cC_{S_p} \simeq \cC(-1)^{*, \iota}[-3]$, where $(-)^*$ denotes the linear dual $\RG_{\wtil{\Lambda}}(-, \wtil{\Lambda})$ and $(-)^{\iota}$ denotes the involution that inverts the group elements of $\wtil{\Lambda}$ (see Nekov\'{a}\v{r} \cite[\S 5.4]{Nek06} or \cite[\S 3]{Kata_12}; the proof is again valid for our non-commutative case).
In particular, by taking the $\psi$-components, we obtain 
\begin{equation}\label{eq:dual}
\cC_{S_p}^{\psi} \simeq \cC^{\chi}(-1)^{*, \iota}[-3].
\end{equation}
As in the proof of Proposition \ref{prop:Tate_seq}(1), we have $H^i(\cC_{S_p}^{\psi}) = 0$ for $i \neq 2, 3$ and
\[
H^2(\cC_{S_p}^{\psi}) \simeq X_{S_p}(L')^{\psi}, 
\quad
H^3(\cC_{S_p}^{\psi}) \simeq (\Z_p)^{\psi} \simeq
\begin{cases}
	\Z_p & \text{if $\delta = 1$}\\
	0 & \text{if $\delta = 0$}.
\end{cases}
\]
In a similar way, we have $H^i(\cC^{\chi}) = 0$ for $i \neq 1, 2$, 
\[
H^1(\cC^{\chi}) \simeq 
\begin{cases}
	\Z_p(1) & \text{if $\delta = 1$}\\
	0 & \text{if $\delta = 0$},
\end{cases}
\]
and an exact sequence
\[
0 \to X'(\cL)^- \to H^2(\cC^{\chi}) \to Z_{S_p}^0(\cL)^- \to 0,
\]
where $X'(\cL)$ denotes the split Iwasawa module for $\cL$, which is defined as the quotient of $X(\cL)$ by requiring that all $p$-adic primes split completely.
By \cite[Lemma (11.4.9)]{NSW08}, we know an exact sequence
$
0 \to Z_{S_p}(\cL)^- \to X(\cL)^- \to X'(\cL)^- \to 0.
$

Note that, by applying the above argument to $\cL = \cF^{\cyc}$, we obtain
\[
\lambda(H^2(\cC_{S_p}^{\psi})) = \lambda_{S_p}^{\psi},
\quad
\mu(H^2(\cC_{S_p}^{\psi})) = \mu_{S_p}^{\psi}
\]
and
\[
\lambda(H^2(\cC^{\chi})) = \lambda^-,
\quad
\mu(H^2(\cC^{\chi})) = \mu^-.
\]
Also, we have $\lambda(H^3(\cC_{S_p}^{\psi})) = \lambda(H^1(\cC^{\chi})) = \delta$ and $\mu(H^3(\cC_{S_p}^{\psi})) = \mu(H^1(\cC^{\chi})) = 0$.
Thus, claim (1) follows from \eqref{eq:dual}.

Let us show claim (2).
We may assume that $\cG$ is $p$-torsion-free so that the homological dimension of $\Lambda$ is finite.
When $\dim \cG = 1$, the claim follows at once from (1).
Let us assume $\dim \cG \geq 2$, so both $H^3(\cC_{S_p}^{\psi})$ and $H^1(\cC^{\chi})$ are pseudo-null.
Also, $X(\cL)^-$ is pseudo-null if and only if so is $H^2(\cC^{\chi})$.
Therefore, by \eqref{eq:dual}, we only have to show that for a perfect complex $C$ whose cohomology groups are all pseudo-null, the same is true for $C^*$.
This claim follows from the Auslander regularity (\cite[Definition 3.3(ii)]{Ven02}) of $\Lambda$ and the fact that pseudo-nullity is closed under taking submodules, quotients, and extensions (see \cite[Proposition 3.6(ii)]{Ven02}).
\end{proof}

\begin{prop}\label{prop:Sram_dual}
We have $S_{\ram}^-(\cL/F^{\cyc}) = S_{\ram}^{S_p, \psi}(L'/F^{\cyc})$, where $S_{\ram}^{S_p, \psi}(L'/F^{\cyc})$ is as in \S \ref{ss:variant}.
\end{prop}

\begin{proof}
Let $v$ be a non-$p$-adic prime of $F^{\cyc}$ that is ramified in $L/F^{\cyc}$.
We have to show that $v$ is split in $\cF^{\cyc}/F^{\cyc}$ if and only if $v$ is totally split in $F^{\cyc, \psi}/F^{\cyc}$.

Since $v$ is ramified in a $p$-extension, by local class field theory, the completion of $F^{\cyc}$ at $v$ contains $\mu_p$.
In other words, $v$ is totally split in $F^{\cyc}(\mu_p)/F^{\cyc}$, which is the extension cut out by $\omega$.
On the other hand, the extension of $F^{\cyc}$ cut out by $\psi$ and $\chi$ are $F^{\cyc, \psi}$ and $\cF^{\cyc}$ respectively.
Therefore, the claim follows.
\end{proof}

Now Theorem \ref{thm:main_CM} follows immediately from Theorems \ref{thm:main} and \ref{thm:main2} applied to $S = S_p$, taking Propositions \ref{prop:lm_dual} and \ref{prop:Sram_dual} into account.

\subsection{The work of Hachimori--Sharifi}\label{ss:HS}

We still keep the notation in \S \ref{ss:intro_CM}.

\begin{thm}[{Hachimori--Sharifi \cite[Theorem 1.2]{HS05}}]\label{thm:HS05}
Suppose that the extension $\cL/\cF$ is strongly admissible, which means that $\dim \cG \geq 2$ and $\cG$ is $p$-torsion-free.
Also, we assume $\mu^- = 0$.
Then 
\[
\rank_{\Z_p[[\cH]]}(X(\cL)^-) = \lambda^- - \delta + \# S_{\ram}^-.
\]
In particular, $X(\cL)^-$ is pseudo-null over $\Z_p[[\cG]]$ if and only if
\[
\lambda^- + \# S_{\ram}^- = \delta.
\]
\end{thm}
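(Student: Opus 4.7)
The theorem has two parts—a rank formula and its pseudo-nullity corollary—and I would take them in reverse order. The ``in particular'' clause follows immediately from Theorem \ref{thm:main_CM}: under the strong admissibility hypothesis $\dim \cG \geq 2$, case (i) of Theorem \ref{thm:main_CM} is vacuous; case (ii) collapses to $\delta = 1$ with $\lambda^- + \# S_{\ram}^- = 1 = \delta$; and case (iii) collapses to $\delta = 0$ with $\lambda^- + \# S_{\ram}^- = 0 = \delta$. Hence, granted $\mu^- = 0$, pseudo-nullity of $X(\cL)^-$ over $\Z_p[[\cG]]$ is equivalent to $\lambda^- + \# S_{\ram}^- = \delta$.

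For the rank formula itself, my plan is to exploit the duality $\cC_{S_p}^\psi \simeq \cC^\chi(-1)^{*, \iota}[-3]$ from the proof of Proposition \ref{prop:lm_dual} together with the Tate sequence from Proposition \ref{prop:Tate_seq}(2). Concretely, I apply Proposition \ref{prop:Tate_seq}(2) to $L'/F$ with $S = S_p$ and $\Sigma_f = S_p \cup S_{\ram}(L/F^{\cyc})$ and take $\psi$-components to obtain
\[
0 \to X_{S_p}(L')^\psi \to P^\psi \to Z_{\Sigma_f \setminus S_p}^0(L')^\psi \to 0
\]
with $\pd_{\cO_\psi[[\cG]]}(P^\psi) \leq 1$. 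One first establishes the classical $\Z_p[[\cH]]$-finite generation of $X(\cL)^-$—the standard propagation of $\mu^- = 0$ under strongly admissible extensions—so that all $\Z_p[[\cH]]$-ranks in sight are well-defined via localization at the total ring of fractions of $\Z_p[[\cH]]$ and additive along the short exact sequence. The middle term $P^\psi$ has $\Z_p[[\cH]]$-rank $\lambda^-$, obtainable via an Euler-characteristic computation on the underlying perfect complex and matched against the commutative case $\cL = \cF^{\cyc}$, where $\cH$ is trivial and the $\Z_p$-rank is $\lambda_{S_p}^\psi = \lambda^-$ by Proposition \ref{prop:lm_dual}(1). For each $v \in \Sigma_f \setminus S_p$, the rank of $Z_v(L')^\psi = \cO_\psi \otimes_{\Z_p[\Delta]} \Z_p[[\cG/\cG_v]]$ is $1$ precisely when $\psi$ is trivial on $\cG_v$ and $\cG_v \cdot \cH = \cG$, which by Proposition \ref{prop:Sram_dual} singles out exactly the primes lying above $S_{\ram}^-$. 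Finally, the duality contributes a shift of $-\delta$ coming from $H^3(\cC_{S_p}^\psi) = (\Z_p)^\psi$, producing the formula $\rank_{\Z_p[[\cH]]}(X(\cL)^-) = \lambda^- - \delta + \# S_{\ram}^-$.

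The main obstacle is the $\Z_p[[\cH]]$-rank bookkeeping: the $\Z_p[[\cH]]$-finite generation of $X(\cL)^-$ must be set up at the outset so that ranks are meaningful and compatible with localization over the non-commutative ring $\Z_p[[\cH]]$; and the precise contribution of each $Z_v(L')^\psi$ requires a case analysis of the position of $\cG_v$ relative to $\cH$ and $\ker \psi$, which is essentially the Frobenius/inertia check already implicit in the proof of Proposition \ref{prop:Sram_dual}. Once this bookkeeping is settled, additivity of rank along the short exact sequence yields the stated formula.
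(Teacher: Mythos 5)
Your first paragraph is fine and is exactly what the paper does: Theorem \ref{thm:HS05} is quoted from Hachimori--Sharifi, and the paper's only contribution to it is the observation that the ``in particular'' clause follows from Theorem \ref{thm:main_CM} (case (i) is excluded by $\dim \cG \geq 2$, and cases (ii), (iii) amount to $\lambda^- + \# S_{\ram}^- = \delta$ with $\delta = 1, 0$ respectively, using the remark that $\dim \cG = 2$ in (ii) may be relaxed to $\dim \cG \geq 2$). Your deduction there is correct.

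The proposed proof of the rank formula, however, has a genuine gap, and in fact the paper explicitly warns against this route: it states that its method (Tate sequences/Selmer complexes as used for Theorems \ref{thm:main} and \ref{thm:main2}) \emph{cannot} recover the quantitative statement, whose original proof goes through Kida's formula applied to the finite subextensions of $\cL/\cF^{\cyc}$ (an alternative proof via Selmer complexes is the content of the separate paper \cite{Kata_27}, not of this one). Concretely, the step ``$P^{\psi}$ has $\Z_p[[\cH]]$-rank $\lambda^-$, obtainable via an Euler-characteristic computation on the underlying perfect complex and matched against the commutative case'' does not work as stated: finitely generated projective $\Z_p[[\cG]]$-modules are \emph{not} finitely generated over $\Z_p[[\cH]]$ (since $\cG/\cH \simeq \Z_p$), so there is no Euler characteristic over $\Z_p[[\cH]]$ to compute from a projective resolution over $\Z_p[[\cG]]$, and ``matching against $\cL = \cF^{\cyc}$'' is not an argument. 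What is actually needed is a descent computation of the $\cH$-homology of the Selmer complex (control of $H_i(\cH', -)$ for open $\cH' \subset \cH$), and this is precisely where $\mu^- = 0$ enters nontrivially and where the real work of Hachimori--Sharifi (Kida's formula) or of \cite{Kata_27} lies; nothing in the present paper supplies it. Two further steps are also unjustified: (a) you pass from $\rank_{\Z_p[[\cH]]} X_{S_p}(L')^{\psi}$ to $\rank_{\Z_p[[\cH]]} X(\cL)^-$ via the duality \eqref{eq:dual}, but Proposition \ref{prop:lm_dual}(2) only transfers pseudo-nullity, not $\Z_p[[\cH]]$-ranks of cohomology, and such a rank-preservation statement for the dual complex would itself require proof; (b) the local bookkeeping is oversimplified: the summands $Z_u(L')^{\psi}$ are indexed by primes $u$ of $F$, whereas $\# S_{\ram}^-$ counts primes of $F^{\cyc}$, and when $\dim (\cH \cap \cG_u) \geq 1$ the module $Z_u(L')^{\psi}$ is $\Z_p[[\cH]]$-torsion and contributes $0$, while such a prime can still be counted in $\# S_{\ram}^-$; reconciling these contributions is again part of the quantitative content that your sketch does not address.
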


It is straightforward to deduce the final part of this theorem from Theorem \ref{thm:main_CM}.
In fact, Theorem \ref{thm:main_CM} says much more: 
We have removed the ``strongly admissibility'' of $\cL/\cF$ and, moreover, we have shown that $\mu^- = 0$ follows from the pseudo-nullity of $X(\cL)^-$.
This answers a question in \cite[Example 5.3]{HS05} affirmatively.

The original proof of Theorem \ref{thm:HS05} relies on Kida's formula, which describes the relation between the Iwasawa invariants of $X(\cF^{\cyc})^-$ and $X(\cL)^-$ when $\cL/\cF^{\cyc}$ is a finite extension.
For strongly admissible $\cL/\cF$, by applying Kida's formula for the finite subextensions of $\cL/\cF^{\cyc}$, they succeeded in determining the $\Z_p[[\cH]]$-rank of $X(\cL)^-$.
The method in this paper cannot be used to recover this quantitative result, but the author \cite{Kata_27} recently obtained a more direct proof of Theorem \ref{thm:HS05} from the perspective of Selmer complexes.

\section*{Acknowledgments}

This work is supported by JSPS KAKENHI Grant Number 22K13898.

{
\bibliographystyle{abbrv}
\bibliography{biblio}
}

\end{document}